\numberwithin{equation}{section}
\newtheorem{theorem}{Theorem}
\author {Fügen TORUNBALCI AYDIN\thanks{Corresponding author. Yildiz Technical University, Faculty of Chemical and Metallurgical Engineering, Department of Mathematical Engineering, 34220, Istanbul, Turkey.  \,e-mail:\,faydin@yildiz.edu.tr, ftorunay@gmail.com } \;, {Kevser KÖKLÜ\thanks{Yildiz Technical University, Faculty of Chemical and Metallurgical Engineering, Department of Mathematical Engineering, 34220, Istanbul, Turkey. \,\,e-mail:  ozkoklu@yildiz.edu.tr }}} 
\title{On Generalizations of the Pell Sequence}	
\begin{document}
\maketitle	

\begin{abstract}
	In this paper, we investigate the generalized Pell sequence, the generalized complex Pell sequence and the generalized dual Pell sequence using the Pell numbers. We obtain special cases of these sequences. Furthermore, we give recurrence relations, vectors, the silver ratio and Binet's formula for the above-investigated sequences.
		
	\textbf{Keywords:}   Pell number,  Pell-Lucas number, generalized Pell sequence, complex numbers, dual numbers.
	
\end{abstract}

\section{Introduction}
The Fibonacci, Lucas, Pell and Pell-Lucas numbers have important parts in mathematics. They are of fundamental importance in the fields of combinatorics and number theory (see, for example, \cite{horadam1, horadam4,koshy}). The use of such special sequences has increased significantly in quantum mechanics, quantum physics, etc.

The  Pell sequence
$$\,\;1\,,\,\,2\,,\,\,5,\,\,12,\,\,29,\,\,70,\,\,169,\,\,408,\,\,985,\,\,2378, \ldots \,,{P_n}, \ldots $$
is defined by the recurrence relation
$${P_n} = 2\,{P_{n - 1}} + {P_{n - 2}}\;,\;\;(n \ge 3),$$
with ${P_{1}} =1 , {P_{2}} = 2\,$, which is well known as the n-th term of the Pell sequence $({P_n})$. The Pell numbers were named after the English mathematician John Pell \cite{bicknell,horadam4,melham}.

The Pell - Lucas sequence

$\,\;2\,,\,\,6\,,\,\,14,\,\,34,\,\,82,\,\,198,\,\,478,\,\,1154,\,\,2786,\,\,6726, \ldots \,,{Q_n}, \ldots $
is defined by the recurrence relation
${Q_{n}} = 2\,{Q_{n - 1}} + {Q_{n - 2}}\;,\;\;(n \ge 3),$
with ${Q_{1}} =2 \,, {Q_{2}} = 6\,$, which is well known as the n-th term of the Pell-Lucas sequence $({Q_n})$ (François Edouard Anatole Lucas, 1876).\\ \\
Furthermore, we can found the matrix representations of the Pell and 
Pell-Lucas numbers in \cite{kilic,tasci}. \\ \,
\\ \,\,
For the Pell and Pell-Lucas numbers, the following properties

\begin{equation}\label {1-1}
\begin{array}{rl}
{P}_{m}{P}_{n + 1} +\,{P}_{m - 1}{P}_{n}=&{P}_{m+n} \,, \,\\
\\
{P}_{m}{P}_{n + 1} - {P}_{m + 1}{P}_{n}=&(-1)^n \,{P}_{m - n} \,, \,\\
\\
{P}_{n - 1}{P}_{n + 1} - {{P}_{n}^2}=&(-1)^{n} \,,\,\\
\\
{{P}_{n}^2} +\,{{P}_{n + 1}^2}=&{{P}_{2n + 1}} \,,\,\\
\\
{{P}_{n + 1}^2} -\,{{P}_{n - 1}^2}=&2\,{P}_{2n} \,,\,\\
\\
2\,{{P}_{n + 1}}\,{P}_{n} - 2\,{{P}_{n}^2}=&{P_{2n}} \,, \,\\
\\
{{P}_{n}^2} +\,{{P}_{n + 3}^2}=&5\,{{P}_{2n + 3}} \,,\,\\
\\
{{P}_{2n + 1}} + {{P}_{2n}}=&2\,{{P}_{n + 1}^2}- 2\,{{P}_{n}^2}- (-1)^n \,,\,\\
\\
{{P}_{n}^2} +\,{P}_{n - 1}{P}_{n + 1}=&\frac{{Q_{n}}}{4}  \,,\,\\
\\
{{P}_{n + 1}} + {P}_{n -1}=&{Q_{n}} \,,\,\\
\\
{P}_{n}\,{Q}_{n}=&{{P}_{2n}} \,\\\ 
\end{array}.
\end{equation}

and the following summation formulas
 
\begin{equation}\label{1-2} 
\begin{gathered}
  P_{1}^2+P_{2}^2+P_{1}^2+\,\dots\,+P_{n}^2=\frac{{P_n}P_{n+1}}{2},
\end{gathered}  
\end{equation}
\begin{equation}\label{1-3} 
\begin{gathered}
  \sum\limits_{k = 0}^{n}\binom{n}{k}\,2^k\,P_{2k}  =P_{2n},
\end{gathered}  
\end{equation}

\begin{equation}\label{1-4} 
\begin{gathered}
  \sum\limits_{k = 0}^{n}\binom{n}{k}\,P_{k}\,P_{n-k}  =2^{n}P_{n}
\end{gathered}  
\end{equation} 

 were hold \cite{bicknell,cimen,horadam4, melham}.  
\section{The Generalized Pell Sequence} 

In this section, we define the generalized Pell sequence denoted by ${\mathbb{P}_{n}}$. \\
\\
The generalized Pell sequence defined by
\begin{equation}\label{2-1}
{\mathbb{P}_{n}} = 2\,{\mathbb{P}_{n-1}} + {\mathbb{P}_{n-2}} \;,\;\;(n \ge 3)
\end{equation}
with ${\mathbb{P}_{0}} = q\,,\;{\mathbb{P}_{1}} = p\,,\;{\mathbb{P}_{2}} = 2\,p + \,q$,\,
where $p,\,q$ are arbitrary integers. That is, the generalized Pell sequence is 
\begin{equation}\label{2-2}
q\,,\,\,p \,,\,\,2\,p + q\,,\,\,5\,p + 2\,q\,,\,\,12\,p + 5\,q\,,\,\,29\,p + 12\,q\,, \,\ldots \,,(p - 2\,q){P_n} + q{P_{n + 1}}, \ldots
\end{equation}
Using the equations (\ref{2-1}) and (\ref{2-2}) , we get 
  \begin{equation}\label{2-3}
 \begin{aligned}
\begin{array}{lll}
	{\mathbb{P}_{n}} = p\,{P_{n}} +\,q\,{P_{n - 1}}\,,\\
	
  {\mathbb{P}_{n+1}} = p\,{P_{n + 1}} + q\,{P_{n}}\,,\\
	
  {\mathbb{P}_{n+2}} = (2\,p + q\,)\,{P_{n + 1}} + p \,{P_{n}}\-.
  \end{array}
   \end{aligned}
   \end{equation}

\noindent Putting ${n}={r}$ in the Eq.(\ref{2-3}) and using the Eq. (\ref{2-1}), we find in turn
\\
 \begin{equation}\label{2-4}
 \begin{aligned}
\begin{array}{lll}
  {\mathbb{P}_{r+3}} = (5\,p + 2\,q\,){P_{r+1}} +\,(2\,p +\,q\,)\,{P_{r}}={\mathbb{P}_{3}}\,{P_{r+1}}+{\mathbb{P}_{2}}\,{P_{r-1}}\,,\\
	
  {\mathbb{P}_{r+4}} = (12\,p + 5\,q)\,{P_{r+1}} + (5\,p + 2\,q)\,{P_{r-1}}={\mathbb{P}_{4}}\,{P_{r+1}}+{\mathbb{P}_{3}}\,{P_{r-1}}.\-
  \end{array}
   \end{aligned}
   \end{equation}
\\
Consequently, we obtain relation between the generalized Pell sequence and the Pell sequence as follows:

\begin{equation}\label{2-5}
\begin{aligned}
 {\mathbb{P}_{n+r}} = {\mathbb{P}_{n}}\,P_{r + 1} + {\mathbb{P}_{n-1}}\,P_{r}.
\end{aligned}
\end{equation}
\\
Also, certain results follow almost immediately from the Eq.(\ref{2-1})
\begin{equation}\label{2-6}
\begin{aligned}
{\mathbb{P}_{n+1}} - 5\, {\mathbb{P}_{n-1}} - 2\,{\mathbb{P}_{n-2}} = 0 \,,
\end{aligned}
\end{equation}
\begin{equation}\label{2-7}
\begin{aligned}
2\,{\mathbb{P}_{n}} - 4\, {\mathbb{P}_{n-1}} -\,{\mathbb{P}_{n-2}} = 0 \,,
\end{aligned}
\end{equation}
\begin{equation}\label{2-8} 
\begin{gathered}
  2\,\sum\limits_{i = 0}^{n-1} {\mathbb{P}_{2i+1}}  = {{\mathbb{P}_{2n}}-q },
\end{gathered}  
\end{equation}
\begin{equation}\label{2-9} 
\begin{gathered}
  2\,\sum\limits_{i = 1}^{n} {{\mathbb{P}_{2i}}}  = {{\mathbb{P}_{2n+1}}-p }.
\end{gathered}  
\end{equation} \\	
For the generalized Pell sequence, we have the following properties:\\
\begin{equation}
({\mathbb{P}_{n}})^2 +\,({\mathbb{P}_{n+1}})^2 = (2p - 2q){\mathbb{P}_{2n+1}} - e_{p}\,{P_{2n + 1}}\,,\;\;\;\;
\end{equation}
\begin{equation}
({\mathbb{P}_{n+1}})^2 -\,({\mathbb{P}_{n-1}})^2 = 2\,[ (2p - 2q){\mathbb{P}_{2n}}  - e_{p}\,{P_{2n}} ]\,,\;\;\;\;
\end{equation}
\begin{equation}
{\mathbb{P}_{n-1}}\,{\mathbb{P}_{n+1}} - ({\mathbb{P}_{n}})^2 = {( - 1)^n}\,e_{p}\,\,,\;\;\;\;
\end{equation}
\begin{equation}
({\mathbb{P}_{n+1}})^2 +\,e_{p}\,({P_{n})^2} = p\,{\mathbb{P}_{2n+1}}\,,\;\;\;\;
\end{equation}
\begin{equation}
{\mathbb{P}_{m}}\,{\mathbb{P}_{n+1}} -\,{\mathbb{P}_{m+1}}\,{\mathbb{P}_{n}} = {( - 1)^n}\, e_{p}\,{P_{m - n}}\,,\;\;\;\;
\end{equation}
\begin{equation}
{\mathbb{P}_{m}}\,{\mathbb{P}_{n+1}} +\,{\mathbb{P}_{m-1}}\,{\mathbb{P}_{n}} = (2p - 2q)\,{\mathbb{P}_{m+n}}\,- \,e_{p}\,P_{m + n}\,,
\end{equation}
\begin{equation}
{\mathbb{P}_{n+1-r}}\,{\mathbb{P}_{n+1+r}} -\,({\mathbb{P}_{n+1}})^2 = {( - 1)^{n - r}\, e_{p}\,{P_{r}^2\,}}\,,\;\;\;\;
\end{equation}
\begin{equation}
{\mathbb{P}_{n}}\,{\mathbb{P}_{n+r+1}} -\,{\mathbb{P}_{n-s}}\,{\mathbb{P}_{n+r+s+1}} = {( - 1)^{n + s}\, e_{p}\,{P_{s}\,{P_{r + s + 1}}}}\,,\;\;\;\;
\end{equation}
\begin{equation}
\frac{{\mathbb{P}_{n+r}} + ( - 1)^r{\mathbb{P}_{n-r}}} {{\mathbb{P}_{n}}} = {Q_{r}}\,,
\end{equation}\,
\\
where ${{e}_{p}} = {p}^{2} - 2\, p\,q -\,{q^2}$ \, $and$ \,${Q}_{r}$\,\,\textit{is the Pell-Lucas number}.\,\\
\\
In addition, we obtain the following relations
\begin{equation}
{P}_{n + r} + (-1)^r\,{P}_{n - r}={Q}_{r}\,{P}_{n} \,, \,\\
\end{equation}
\begin{equation}
{P}_{n + r}\,{P}_{n - r} - {{P}_{n}^2}=(-1)^{n-r+1}\,{{P}_{r}^2} \,, \,\\ 
\end{equation}
\begin{equation}
{P}_{n}\,{P}_{n + r + 1} - {P}_{n - s}\,{P}_{n + r + s + 1}=(-1)^{n+s}\,{P}_{s}\,{P}_{r + s + 1} \,, \,\\
\end{equation}
\begin{equation} 
{P}_{n - 1}\,{P}_{n + r} - {P}_{n - s - 1}\,{P}_{n + r + s}=(-1)^{n+s+1}\,{P}_{s}\,{P}_{r + s + 1} \,, \,\\
\end{equation}
\begin{equation}
\begin{array}{rl}
{{P}_{n}\,{P}_{n + r} + {P}_{n - 1}\,{P}_{n + r + 1} - {P}_{n - s}\,{P}_{n + r + s} - {P}_{n - s - 1}\,{P}_{n + r + s + 1}} \\
 \quad \quad =2\,(-1)^{n+s+1}\,{P}_{s}\,{P}_{r + s + 1} \,. \,\\
\end{array}
\end{equation}
\begin{theorem}
	If \, ${\mathbb{P}_{n}}$ is the generalized Pell number, then
	
	$$\mathop {\lim }\limits_{n \to \infty } \frac{{\mathbb{P}_{n+1}}}{{\mathbb{P}_{n}}} = \frac{p\,\alpha+\,q} {q\alpha  + (p - 2\,q) },$$
\\	
where $\alpha  = ({1 + \sqrt 2 )}$.
\end{theorem}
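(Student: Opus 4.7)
The plan is to express $\mathbb{P}_n$ purely in terms of ordinary Pell numbers, reduce the ratio to something whose limit is governed by the known ratio $P_{n+1}/P_n \to \alpha$, and then use the defining identity $\alpha^{2}=2\alpha+1$ to cast the answer in the asserted form.

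From the list in (\ref{2-2}) (equivalently from solving (\ref{2-3})) we have
\[
\mathbb{P}_{n}=(p-2q)\,P_{n}+q\,P_{n+1}.
\]
Using this in both numerator and denominator gives
\[
\frac{\mathbb{P}_{n+1}}{\mathbb{P}_{n}}
=\frac{(p-2q)\,P_{n+1}+q\,P_{n+2}}{(p-2q)\,P_{n}+q\,P_{n+1}}
=\frac{(p-2q)\dfrac{P_{n+1}}{P_{n}}+q\dfrac{P_{n+2}}{P_{n}}}{(p-2q)+q\dfrac{P_{n+1}}{P_{n}}}.
\]
I would then invoke the classical fact (provable either from Binet's formula $P_{n}=(\alpha^{n}-\beta^{n})/(\alpha-\beta)$, with $\alpha=1+\sqrt{2}$, $\beta=1-\sqrt{2}$, or directly from the recurrence $P_{n}=2P_{n-1}+P_{n-2}$) that
\[
\lim_{n\to\infty}\frac{P_{n+1}}{P_{n}}=\alpha,\qquad
\lim_{n\to\infty}\frac{P_{n+2}}{P_{n}}=\alpha^{2}.
\]
Passing to the limit yields
\[
\lim_{n\to\infty}\frac{\mathbb{P}_{n+1}}{\mathbb{P}_{n}}=\frac{(p-2q)\,\alpha+q\,\alpha^{2}}{(p-2q)+q\,\alpha}.
\]

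The last step is to rewrite the numerator using the silver-ratio identity $\alpha^{2}=2\alpha+1$:
\[
(p-2q)\alpha+q\alpha^{2}=(p-2q)\alpha+q(2\alpha+1)=p\alpha+q,
\]
so that
\[
\lim_{n\to\infty}\frac{\mathbb{P}_{n+1}}{\mathbb{P}_{n}}=\frac{p\alpha+q}{q\alpha+(p-2q)},
\]
which is the claimed value. The only nontrivial point is that one must justify taking the limit inside the quotient; this is standard once one observes that the denominator $(p-2q)+q\,P_{n+1}/P_{n}\to q\alpha+(p-2q)$ is eventually bounded away from zero (the case when this limit vanishes corresponds to degenerate choices of $p,q$ for which $\mathbb{P}_{n}$ is not genuinely growing, and the statement is vacuous). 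The algebraic collapse via $\alpha^{2}=2\alpha+1$ is the one place where a small calculation is needed; everything else is bookkeeping.
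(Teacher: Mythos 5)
Your proof is correct and follows essentially the same route as the paper: express $\mathbb{P}_n$ linearly in Pell numbers, divide by $P_n$, and use $\lim P_{n+1}/P_n=\alpha$. The only cosmetic difference is that the paper writes the numerator directly as $p\,P_{n+1}+q\,P_n$ (the equivalent form of the same linear representation), which avoids your final simplification via $\alpha^2=2\alpha+1$; your added remark about the denominator being bounded away from zero is a detail the paper silently skips.
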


\begin{proof}
	We know for the Pell number $P_{n}$
	$$\mathop {\lim }\limits_{n \to \infty } \frac{P_{n + 1}}{P_n} = \alpha, $$
\\ \, 
where $\alpha  = ({1 + \sqrt 2 )}$ \cite{horadam4}. \\ \\ \,
Then for the generalized Pell number ${\mathbb{P}_{n}}$ ,\, 
we obtain \\
  \begin{equation} \label{2-24}
	\begin{aligned} 
	\begin{array}{rl}
	$$\mathop {\lim }\limits_{n \to \infty } \frac{{\mathbb{P}_{n+1}}}{{\mathbb{P}_{n}}} =& \mathop {\lim }\limits_{n \to \infty } \frac{{{p\,{P_{n + 1}}\, +\,q\,{P_{n}}}}} {{\;  q \,{P_{n + 1}} + (p - 2\,q)\,{P_{n}}}} \\ 
	\\
	     =& \frac {p\,\alpha  +\,q} {q\,\alpha +(p -2\,q)} \, . $$
\end{array}
\end{aligned}
\end{equation}
\end{proof}
\begin{theorem}
	The Binet's formula \footnote[2]{Binet's formula is the explicit formula to obtain the n-th Pell and Pell-Lucas numbers. It is well known that for the Pell and Pell-Lucas numbers, Binet's formulas are
		$$
		{P_n} = \frac{{\alpha ^n} - {\beta ^n}} {\alpha  - \beta }
		$$
		and  
		$$
		{Q_n} = {\alpha ^n} + {\beta ^n}
		$$
		respectively, where $\alpha  + \beta  = 2\,,\;\alpha  - \beta  = {2 \sqrt 2 } \,,\;\alpha \beta  =  - 1$    and $\alpha  = {1 + \sqrt 2}$ ,\, $\beta  = { 1 - \sqrt 2 }$,\,\,\cite{horadam4,walton}.} 
	for  the generalized Pell sequence is as follows;	
\begin{equation}
	{\mathbb{P}_{n}} = \frac{(\,\bar \alpha \,\,{\alpha ^n} - \bar \beta \,\,{\beta ^n})} {\alpha  - \beta }.
\end{equation}
\end{theorem}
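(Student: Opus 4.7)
The cleanest route is to exploit the fact that the generalized Pell recurrence \eqref{2-1} is the same linear recurrence satisfied by $P_n$, so its characteristic polynomial is $x^2-2x-1=0$ with roots $\alpha=1+\sqrt{2}$ and $\beta=1-\sqrt{2}$. Since these two roots are distinct, the general solution has the form $\mathbb{P}_n = A\,\alpha^n + B\,\beta^n$, and the specific sequence is pinned down by its two initial values $\mathbb{P}_0=q$, $\mathbb{P}_1=p$.

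First I would record the identification $\mathbb{P}_n = p\,P_n + q\,P_{n-1}$ from \eqref{2-3}, which already expresses $\mathbb{P}_n$ as a linear combination of two solutions of the same recurrence. Substituting the classical Binet formula $P_n=(\alpha^n-\beta^n)/(\alpha-\beta)$ recalled in the footnote gives
\begin{equation*}
\mathbb{P}_n = \frac{p\,(\alpha^n-\beta^n) + q\,(\alpha^{n-1}-\beta^{n-1})}{\alpha-\beta}
= \frac{\alpha^{n-1}(p\alpha+q) - \beta^{n-1}(p\beta+q)}{\alpha-\beta}.
\end{equation*}
Then I would use $\alpha\beta=-1$, so $1/\alpha=-\beta$ and $1/\beta=-\alpha$, to rewrite $\alpha^{n-1}(p\alpha+q)=\alpha^n(p+q/\alpha)=\alpha^n(p-q\beta)$ and similarly $\beta^{n-1}(p\beta+q)=\beta^n(p-q\alpha)$. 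Setting $\bar\alpha:=p-q\beta$ and $\bar\beta:=p-q\alpha$ collapses the expression into the claimed Binet form
\begin{equation*}
\mathbb{P}_n = \frac{\bar\alpha\,\alpha^n - \bar\beta\,\beta^n}{\alpha-\beta}.
\end{equation*}

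As a sanity check before concluding, I would verify the two initial conditions directly from this closed form: at $n=0$ one gets $(\bar\alpha-\bar\beta)/(\alpha-\beta) = q(\alpha-\beta)/(\alpha-\beta) = q = \mathbb{P}_0$, and at $n=1$ one gets $(\bar\alpha\alpha-\bar\beta\beta)/(\alpha-\beta) = (p(\alpha-\beta) - q(\alpha\beta-\alpha\beta))/(\alpha-\beta) = p = \mathbb{P}_1$; together with the shared recurrence this forces equality for all $n$. There is no real obstacle here beyond bookkeeping; the only mildly delicate point is confirming exactly what the paper intends by $\bar\alpha,\bar\beta$, so my proof would make the identification $\bar\alpha=p-q\beta$, $\bar\beta=p-q\alpha$ explicit (equivalently $\bar\alpha=(p\alpha+q)/\alpha$, $\bar\beta=(p\beta+q)/\beta$), since this matches both the initial conditions and the limit computed in the preceding theorem.
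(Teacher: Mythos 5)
Your proposal is correct and follows essentially the same route as the paper: both substitute the classical Binet formula into a linear-combination representation of $\mathbb{P}_n$ in terms of Pell numbers and collect the coefficients of $\alpha^n$ and $\beta^n$; you start from $\mathbb{P}_n = p\,P_n + q\,P_{n-1}$ while the paper uses the equivalent form $\mathbb{P}_n = (p-2q)P_n + qP_{n+1}$, and your $\bar\alpha = p - q\beta$, $\bar\beta = p - q\alpha$ coincide with the paper's $\bar\alpha = p + q(\alpha-2)$, $\bar\beta = p + q(\beta-2)$ since $\alpha - 2 = -\beta$ and $\beta - 2 = -\alpha$. The added check of the initial values and the uniqueness argument via the shared recurrence are harmless extras, not a different method.
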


\begin{proof}
Characteristic equation of the recurrence relation ${\mathbb{P}_{n+2}}={\mathbb{P}_{n+1}}+ 2\,{\mathbb{P}_{n}}$ \ \ is

\begin{equation}\label{2-26}
{{t}^{2}}- 2\,t-1=0.
\end{equation}
The roots of this equation are 

\begin{equation}\label{2-27}
\alpha  = {{1 + \sqrt 2 }} \, \, \, \, \text{and} \, \, \, \, \beta  = {{1 - \sqrt 2 }},
\end{equation} 
\\
where  $\alpha +\beta =2\ ,\ \ \alpha -\beta =2 \sqrt 2\,,\ \ \alpha \beta =-1$.\\
\\
Using the recurrence relation and ${\mathbb{P}_{0}}=q$,  ${\mathbb{P}_{1}}=p$  initial values, we obtain the Binet's formula for ${\mathbb{P}_{n}}$ as
	
\begin{equation*}\label{2-28}
	\begin{aligned}
	\begin{array}{lll}
	{\mathbb{P}_{n}} &= (\,p -\,2\,q\,)\,{P_n} +\,q\,{P_{n + 1}}\\
	& = (\,p  - \,2\,q\,)\,(\frac{{\alpha ^n} - \,{\beta ^n}} {\alpha  - \,\beta }) + \,q\,(\frac{{\alpha ^{n + 1}} - {\beta ^{n + 1}}} {\alpha  - \,\beta }) \\ 
	& = \frac{\overline \alpha  \;{\alpha ^n}\; - \,\overline {\beta \,} \,{\beta ^n}}  {\alpha  - \beta },\,
	
	\end{array}
	\end{aligned}
	\end{equation*}
\\
where $\bar {\alpha }=\,p + \, q\,({\alpha} - 2)\,, \, \, \, \,
\bar {\beta }=\,p + q\,({\beta}- 2)\,$.
\end{proof}

\subsection{The Generalized Pell Vectors}
 A generalized Pell vector is defined by
$$
\overrightarrow {{\mathbb{P}_{n}}} = ({\mathbb{P}_{n}}\,,\,{\mathbb{P}_{n+1}},\,{\mathbb{P}_{n+2}}).
$$
From the Eq. (\ref{2-2}) it can be expressed as

\begin{equation}\label{2-29}
\begin{aligned}
\begin{array}{lll}
\overrightarrow {{\mathbb{P}_{n}}}&=(p - 2\,q)\overrightarrow {{\rm{P}}}_{n}+\,q\overrightarrow {{\rm{P}}}_{\rm{n + 1}},
\end{array}
\end{aligned}
\end{equation}
where $\overrightarrow {{\mathbb{P}_{n}}} = ({\mathbb{P}_{n}}\,,\,{\mathbb{P}_{n+1}},\,{\mathbb{P}_{n+2}})$. \ $\overrightarrow {{\mathbb{P}_{n+1}}} = ({\mathbb{P}_{n+1}}\,,\,{\mathbb{P}_{n+2}},\,{\mathbb{P}_{n+3}})$ is also $(n+1)$-th Pell vector.\\
\\
The product of \,$\overrightarrow {{\mathbb{P}_{n}}}$ and $ \lambda \in \mathbb{R}$ is given by 
$$ 	\lambda \,\overrightarrow {{\mathbb{P}_{n}}} =(\lambda \overrightarrow {{\mathbb{P}_{n}}},\,\, \lambda\overrightarrow {{\mathbb{P}_{n+1}}},\,\, \lambda\overrightarrow {{\mathbb{P}_{n+2}}})  $$
and,  $\overrightarrow {{\mathbb{P}_{n}}}$ and $\overrightarrow {{\mathbb{P}_{m}}}$ are equal if and only if 
$$
\begin{aligned}
\begin{array}{lcl}
{\mathbb{P}_{n}}&=&{\mathbb{P}_{m}} \\
{\mathbb{P}_{n+1}} &= &{\mathbb{P}_{m+1}} \\
{\mathbb{P}_{n+2}}&=&{\mathbb{P}_{m+2}}.
\end{array}
\end{aligned} 
$$
\begin{theorem}
Let ${\overrightarrow {{\mathbb{P}_{n}}}}$ and ${\overrightarrow {{\mathbb{P}_{m}}}}$ be two generalized Pell vectors. The dot product of ${\overrightarrow {{\mathbb{P}_{n}}}}$ and            
${\overrightarrow {{\mathbb{P}_{m}}}}$ is given by

\begin{equation} \label{2-30}
\begin{array} {rl}
\left\langle \overrightarrow {{\mathbb{P}_{n}}},\overrightarrow {{\mathbb{P}_{m}}} \right\rangle  =& {p ^2}( {P_{n+m+3}}+{P_{n}}{P_{m}} ) \\ 
 &+\,p\,q\,( {P_{n+m+2}}+ 2\,{P_{n+m}}+ 2\,{P_{n+1}}\,{P_{m+1}}) \\ 
&+ {q^2}\,({P_{n+m+1}}+{P_{n-1}}\,{P_{m-1}}) \,.
\end{array}
\end{equation}
\end{theorem}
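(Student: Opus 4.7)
The plan is to expand the dot product
\[
\langle \overrightarrow{\mathbb{P}_{n}}, \overrightarrow{\mathbb{P}_{m}}\rangle
= \mathbb{P}_{n}\mathbb{P}_{m} + \mathbb{P}_{n+1}\mathbb{P}_{m+1} + \mathbb{P}_{n+2}\mathbb{P}_{m+2}
\]
directly, using the representation $\mathbb{P}_{k} = p\,P_{k} + q\,P_{k-1}$ from (\ref{2-3}) applied with $k = n,n+1,n+2$ and $k = m,m+1,m+2$, then collecting coefficients of $p^{2}$, $pq$, and $q^{2}$, and finally simplifying each coefficient with the Pell addition identity ${P}_{k}{P}_{l+1}+{P}_{k-1}{P}_{l}={P}_{k+l}$ from (\ref{1-1}) together with the recurrence $P_{k+1}=2P_{k}+P_{k-1}$.

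After substitution, the three coefficients become
\[
[p^{2}]\colon P_{n}P_{m}+P_{n+1}P_{m+1}+P_{n+2}P_{m+2}, \qquad
[q^{2}]\colon P_{n-1}P_{m-1}+P_{n}P_{m}+P_{n+1}P_{m+1},
\]
and a sum of six cross terms for $pq$. The $p^{2}$ coefficient collapses by applying the addition identity with $(k,l)=(n+2,m+1)$ to the last two summands, giving $P_{n+m+3}+P_{n}P_{m}$, which matches the right-hand side. The $q^{2}$ coefficient collapses analogously by applying the identity with $(k,l)=(n+1,m)$ to the last two summands, producing $P_{n+m+1}+P_{n-1}P_{m-1}$. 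These two cases are straightforward.

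The main obstacle is the $pq$ coefficient
\[
P_{n}P_{m-1}+P_{n-1}P_{m}+P_{n+1}P_{m}+P_{n}P_{m+1}+P_{n+2}P_{m+1}+P_{n+1}P_{m+2},
\]
which must be reshaped into $P_{n+m+2}+2P_{n+m}+2P_{n+1}P_{m+1}$. The key observation is that the addition identity, applied with $(k,l)=(n+1,m-1)$ and $(n,m)$ respectively, yields
\[
P_{n+1}P_{m}+P_{n}P_{m-1}=P_{n+m}, \qquad
P_{n}P_{m+1}+P_{n-1}P_{m}=P_{n+m},
\]
so the first four cross terms sum to $2P_{n+m}$. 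For the remaining pair $P_{n+1}P_{m+2}+P_{n+2}P_{m+1}$, I would use the identity with $(k,l)=(n+1,m+1)$ to write $P_{n+1}P_{m+2}=P_{n+m+2}-P_{n}P_{m+1}$ and, symmetrically, $P_{n+2}P_{m+1}=P_{n+m+2}-P_{n+1}P_{m}$, then reintroduce the recurrence $P_{m+2}=2P_{m+1}+P_{m}$ (and its $n$-counterpart) to convert the combination $-(P_{n}P_{m+1}+P_{n+1}P_{m})$ picked up above into the desired $+2P_{n+1}P_{m+1}$ term modulo one copy of $P_{n+m+2}$.

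Collecting the three simplified coefficients gives exactly (\ref{2-30}). The computation is entirely mechanical once the two symmetric applications of the addition identity are recognized; tracking signs in the $pq$ rearrangement is the only delicate point.
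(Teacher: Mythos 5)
Your proposal is correct and follows essentially the same route as the paper: expand $\mathbb{P}_k = p\,P_k + q\,P_{k-1}$ in the three products $\mathbb{P}_n\mathbb{P}_m$, $\mathbb{P}_{n+1}\mathbb{P}_{m+1}$, $\mathbb{P}_{n+2}\mathbb{P}_{m+2}$, collect the $p^2$, $pq$, $q^2$ coefficients, and contract them with the addition identity $P_kP_{l+1}+P_{k-1}P_l=P_{k+l}$ together with the recurrence. The only difference is that you spell out the identity applications (in particular the $pq$ rearrangement giving $P_{n+m+2}+2P_{n+m}+2P_{n+1}P_{m+1}$), which the paper's proof leaves implicit after displaying the three expanded products.
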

\begin{proof}
The dot product of $\;$
$\overrightarrow {{\mathbb{P}_{n}}}=({\mathbb{P}_{n}},\,{\mathbb{P}_{n+1}},\,{\mathbb{P}_{n+2}})$ \, \,and\,\\ 
$\overrightarrow {{\mathbb{P}_{m}}}=({\mathbb{P}_{m}}\,,\,{\mathbb{P}_{m+1}},\,{\mathbb{P}_{m+2}})$  \, defined by 
\begin{equation*}
\begin{aligned}
\begin{array} {lll}
\left\langle \overrightarrow {{\mathbb{P}_{n}}},\overrightarrow {{\mathbb{P}_{m}}} \right\rangle & = {\mathbb{P}_{n}} {\mathbb{P}_{m}}+{\mathbb{P}_{n+1}}{\mathbb{P}_{m+1}} + {\mathbb{P}_{n+2}}{\mathbb{P}_{m+2}}.\\
\end{array}
\end{aligned}
\end{equation*}
Using the equations (\ref{2-1}), (\ref{2-2}) and (\ref{2-3}), we obtain

\begin{equation} \label{2-31}
\begin{aligned}
\begin{array} {lll}
{\mathbb{P}_{n}}\,{\mathbb{P}_{m}}={p ^2}\,({P_{n}}\,{P_{m}}) + p\,q\, ({P_{n}}\,{P_{m - 1}} + {P_{n - 1}}\,{P_{m}})\\
\quad \quad \quad \quad \quad \quad + {q ^2}\,({P_{n - 1}}\,{P_{m - 1}}),
\end{array}
\end{aligned}
\end{equation}
\begin{equation} \label{2-32}
\begin{aligned}
\begin{array} {lll}
{\mathbb{P}_{n+1}}\,{\mathbb{P}_{m+1}}={p ^2}\,({P_{n+1}}\,{P_{m+1}})+p\,q\,({{P_{n+1}}{P_{m}}+{P_{n}}\,{P_{m+1}}})\\
\quad \quad \quad \quad \quad \quad \quad \quad +{q ^2}\,({P_{n}}\,{P_{m}}),
\end{array}
\end{aligned}
\end{equation}
\begin{equation} \label{2-33}
\begin{aligned}
\begin{array} {lll}
{\mathbb{P}_{n+2}}\,{\mathbb{P}_{m+2}}={p ^2}\,({P_{n+2}}\,{P_{m+2}})+p\,q\,({P_{n+2}}{P_{m+1}}+{P_{n+1}{P_{m+2}}})\\
\quad\quad \quad \quad \quad \quad \quad \quad +{q ^2}\,({P_{n+1}}{P_{m+1}}),
\end{array}
\end{aligned}
\end{equation}
\begin{equation*} \label{2-34}
\begin{aligned}
\begin{array} {rl}
\left\langle  \overrightarrow {{\mathbb{P}_{n}}},\overrightarrow {{\mathbb{P}_{m}}} \right\rangle =& {p ^2}({P_{n}}\,{P_{m}} + {P_{n+m+3}}) \\
 &+ p\,q\,(2\,{P_{n + 1}}\,{P_{m+1}}+ 2\,{P_{n+m}} + {P_{n+m+2}})\\
 &+ {q ^2}\,({P_{n - 1}}\,{P_{m - 1}}+{P_{n+m+1}})\,.
\end{array}
\end{aligned}
\end{equation*}
\\
Then, from the equations (\ref{2-31}), (\ref{2-32}) and (\ref{2-33}), we have the equation (\ref{2-30}).
\end{proof}
\noindent\textbf{Special Case-1:} For the dot product of the generalized Pell vectors $\overrightarrow {{\mathbb{P}_{n}}}$ and $\overrightarrow {{\mathbb{P}_{n+1}}}$, we get

\noindent\begin{equation} \label{2-35}
\begin{aligned}
\begin{array}{lll}
\left\langle {{\overrightarrow {{\mathbb{P}_{n}}}}}, {{\overrightarrow {{\mathbb{P}_{n+1}}}}} \right\rangle & ={\mathbb{P}_{n}} {\mathbb{P}_{n+1}}+{\mathbb{P}_{n+1}}{\mathbb{P}_{n+2}}+{\mathbb{P}_{n+2}}{\mathbb{P}_{n+3}}\\
\\
& =\ {p ^2}\,({P_{2n+4}} + {P_{n}}\,{P_{n + 1}}) +  p\,q\,[2\,{P_{2n+3}} +\,{P_{2n-1}} + 2\,P_{n}\,P_{n-1} ] \\
&\quad \quad + {q^2}\,({P_{2n + 2}} + {P_{n - 1}}\,{P_{n}}\,)
\end{array}	
\end{aligned}
\end{equation}
and
\\
\noindent\begin{equation} \label{2-36}
\begin{aligned}
\begin{array}{lll}
\left\langle {{\overrightarrow {{\mathbb{P}_{n}}}}}, {{\overrightarrow {{\mathbb{P}_{n}}}}} \right\rangle & = ({\mathbb{P}_{n}})^2 +({\mathbb{P}_{n+1}})^2+({\mathbb{P}_{n+2}})^2\\
&  = {p^2}({P_{2n+3}} + {P_n^2}\,) + 2\,p\,q\,({P_{2n+2}}+{P_{n}}\,{P_{n - 1}})\\
&\quad \quad + {q^2}\,(\, {P_{2n+1}} + {P_{n-1}^2}\,).
\end{array}	
\end{aligned}
\end{equation}
\\
Then for the norm of the generalized Pell vector, we have
\begin{equation}\label{2-37}	
\begin{aligned}
\begin{array}{rl}
\left\| {{\overrightarrow {{\mathbb{P}_{n}}}}} \right\|^2 =& \left\langle \overrightarrow {{\mathbb{P}_{n}}},\overrightarrow {{\mathbb{P}_{n}}} \right\rangle= {\mathbb{P}_{n}}^2 +{\mathbb{P}_{n+1}}^2+{\mathbb{P}_{n+2}}^2 \\
=& {p^2}\left( {{P_{2n + 3}} + P_n^2 } \,\right) \\ 
& + \,2\,p \, q\,\left({P_{2n + 2}}+{P_{n}}\,{P_{n - 1}}\, \right)  \\
& + \,{q^2}\left( \,{{P_{2n + 1}} + P_{n - 1}^2}\,\right) .\\
\end{array}
\end{aligned}
\end{equation} \,	
\begin{theorem}
Let ${\overrightarrow {{\mathbb{P}_{n}}}}$  and ${\overrightarrow {{\mathbb{P}_{m}}}}$  be two generalized Pell vectors. The cross product of ${\overrightarrow {{\mathbb{P}_{n}}}}$  and ${\overrightarrow {{\mathbb{P}_{m}}}}$  is given by

\begin{equation}\label{2-38}
{\overrightarrow {{\mathbb{P}_{n}}}} \times {\overrightarrow {{\mathbb{P}_{m}}}} = ( - 1)^{m+1}\,{P_{n-m}}\,(i + 2\, j - k)\,{e_{p}}.
\end{equation}
\end{theorem}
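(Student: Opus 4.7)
The plan is to expand the cross product componentwise using the standard determinant formula
\[
\overrightarrow{\mathbb{P}_{n}} \times \overrightarrow{\mathbb{P}_{m}}
= \bigl(\mathbb{P}_{n+1}\mathbb{P}_{m+2}-\mathbb{P}_{n+2}\mathbb{P}_{m+1}\bigr)\,i
- \bigl(\mathbb{P}_{n}\mathbb{P}_{m+2}-\mathbb{P}_{n+2}\mathbb{P}_{m}\bigr)\,j
+ \bigl(\mathbb{P}_{n}\mathbb{P}_{m+1}-\mathbb{P}_{n+1}\mathbb{P}_{m}\bigr)\,k,
\]
and then reduce each of the three scalar coefficients to a single multiple of $e_{p}\,P_{n-m}$ by invoking the already established identity
\[
\mathbb{P}_{a}\mathbb{P}_{b+1}-\mathbb{P}_{a+1}\mathbb{P}_{b}=(-1)^{b}\,e_{p}\,P_{a-b}.
\]

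First, I would handle the $k$-coefficient directly: with $a=n,\ b=m$ the identity gives $\mathbb{P}_{n}\mathbb{P}_{m+1}-\mathbb{P}_{n+1}\mathbb{P}_{m}=(-1)^{m}e_{p}P_{n-m}$, so the $k$-component equals $(-1)^{m}e_{p}P_{n-m}=-(-1)^{m+1}e_{p}P_{n-m}$, which matches the coefficient $-1$ of $k$ in the claimed expression. Next, for the $i$-coefficient, applying the same identity with $a=n+1,\ b=m+1$ yields $\mathbb{P}_{n+1}\mathbb{P}_{m+2}-\mathbb{P}_{n+2}\mathbb{P}_{m+1}=(-1)^{m+1}e_{p}P_{n-m}$, giving exactly the coefficient of $i$ in the target formula.

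The one step that requires a small manipulation is the $j$-coefficient, since the index gap of $2$ prevents a direct application of the identity. I would use the defining recurrence (\ref{2-1}) to write $\mathbb{P}_{n+2}=2\mathbb{P}_{n+1}+\mathbb{P}_{n}$ and $\mathbb{P}_{m+2}=2\mathbb{P}_{m+1}+\mathbb{P}_{m}$, so that
\[
\mathbb{P}_{n+2}\mathbb{P}_{m}-\mathbb{P}_{n}\mathbb{P}_{m+2}
= 2\bigl(\mathbb{P}_{n+1}\mathbb{P}_{m}-\mathbb{P}_{n}\mathbb{P}_{m+1}\bigr)
= -2(-1)^{m}e_{p}P_{n-m}=2(-1)^{m+1}e_{p}P_{n-m}.
\]
Because the $j$ term in the determinant expansion carries a minus sign, the coefficient of $j$ becomes $2(-1)^{m+1}e_{p}P_{n-m}$, which matches the $2j$ in the claimed result.

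Factoring the common quantity $(-1)^{m+1}e_{p}P_{n-m}$ from the three components then produces the stated identity. I do not foresee any serious obstacle; the only thing that needs care is bookkeeping of the sign $(-1)^{m}$ vs.\ $(-1)^{m+1}$ and the recognition that the difference of the extreme entries must first be reduced by one application of the recurrence before the key lemma can be used.
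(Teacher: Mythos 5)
Your proof is correct and takes essentially the same route as the paper: both expand the $3\times 3$ determinant and reduce the three $2\times 2$ minors to $(\pm 1,\pm 2)$ multiples of $(-1)^{m+1}e_{p}P_{n-m}$, the paper via the auxiliary Pell identities it lists, you via the already stated generalized identity $\mathbb{P}_{m}\mathbb{P}_{n+1}-\mathbb{P}_{m+1}\mathbb{P}_{n}=(-1)^{n}e_{p}P_{m-n}$ plus one application of the recurrence. Your treatment of the middle minor even supplies the factor $2$ that is misprinted in the paper's intermediate display for $\mathbb{P}_{m+2}\mathbb{P}_{n}-\mathbb{P}_{m}\mathbb{P}_{n+2}$.
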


\begin{proof}
The cross product of ${\overrightarrow {{\mathbb{P}_{n}}}} \times {\overrightarrow {{\mathbb{P}_{m}}}}$ defined by \\ 
\\
\begin{equation}\label{2-39} 
\begin{array}{rl}
{\overrightarrow {{\mathbb{P}_{n}}}} \times {\overrightarrow {{\mathbb{P}_{m}}}}
=&\begin{vmatrix}
	i & j & k \\
	{\mathbb{P}_{n}} & {\mathbb{P}_{n+1}} & {\mathbb{P}_{n+2}}  \\
	{\mathbb{P}_{m}} & {\mathbb{P}_{m+1}} & {\mathbb{P}_{m+2}}  \\
\end{vmatrix} \\ \\
=&\,i\,({\mathbb{P}_{m+2}}{\mathbb{P}_{n+1}}-{\mathbb{P}_{m+1}}{\mathbb{P}_{n+2}}) \\
&-j\,({\mathbb{P}_{m+2}}{\mathbb{P}_{n}}-{\mathbb{P}_{m}}{\mathbb{P}_{n+2}}) \\
&+k\,({\mathbb{P}_{m+1}}{\mathbb{P}_{n}}-{\mathbb{P}_{m}}{\mathbb{P}_{n+1}}) \,.
\end{array}
\end{equation}
\\
Now, we calculate the cross products:
\\ \\
Using relations (\ref{2-1}) and (\ref{2-3}) or the following properties  
\begin{equation}\label{2.40}
\left\{\begin{array}{l}
{{P}_{n}^2}-P_{n+1} P_{n-1}=(-1)^{n+1},\\
{{P}_{n-1}}\,P_{m+1}-P_{n+2}\,P_{m-1} =5\,(-1)^{m-1}\,P_{n-m},\\
{{P}_{n-1}}\,P_{m+1}-P_{n+1}\,P_{m-1} =2\,(-1)^{m-1}\,P_{n-m},\\
{{P}_{n}}\,P_{m+2}-P_{n+2}\,P_{m} =2\,(-1)^{m}\,P_{n-m},\\
\end{array}\right.
\end{equation}\\
\, we get,
\begin{equation}\label{2-41}
{\mathbb{P}_{m+2}}{\mathbb{P}_{n+1}}-{\mathbb{P}_{m+1}}{\mathbb{P}_{n+2}}=\,(-1)^{m+1}\,{P_{n-m}}\,({p^2}-2\,p\,q-{q^2})=(-1)^{m+1}\,e_{p},
\end{equation} \,
\begin{equation}\label{2-42}
{\mathbb{P}_{m+2}}{\mathbb{P}_{n}}-{\mathbb{P}_{m}}{\mathbb{P}_{n+2}}=\,(-1)^{m}\,{P_{n-m}}\,({p^2}-2\,p\,q-{q^2})=(-1)^{m}\,e_{p}, 
\end{equation} \\
and
\begin{equation}\label{2-43}
{\mathbb{P}_{m+1}}{\mathbb{P}_{n}}-{\mathbb{P}_{m}}{\mathbb{P}_{n+1}}=\,(-1)^{m}\,{P_{n-m}}\,({p^2}-2\,p\,q-{q^2})=(-1)^{m}\,e_{p}. 
\end{equation}
\par \,
Then from the equations (\ref{2-41}), (\ref{2-42}) and (\ref{2-43}), we obtain the equation (\ref{2-38}).\\
\end{proof}

\begin{theorem}
Let ${\overrightarrow {{\mathbb{P}_{n}}}}$,\, ${\overrightarrow {{\mathbb{P}_{m}}}}$  and \, ${\overrightarrow {{\mathbb{P}_{l}}}}$  be the generalized Pell vectors. The mixed product of these vectors is

\begin{equation}\label{2-44}
\left\langle {{\overrightarrow {{\mathbb{P}_{n}}}} \times {\overrightarrow {{\mathbb{P}_{m}}}}\;,\;{\overrightarrow {{\mathbb{P}_{l}}}}} \right\rangle  = 0.
\end{equation}
\end{theorem}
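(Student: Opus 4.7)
The plan is to reduce the mixed product to a one-line computation using the previous theorem. By the cross product formula (\ref{2-38}), $\overrightarrow{\mathbb{P}_n} \times \overrightarrow{\mathbb{P}_m}$ is always a scalar multiple of the fixed vector $(1,2,-1)$; specifically, the scalar is $(-1)^{m+1} P_{n-m}\, e_p$. So the mixed product equals that scalar times $\langle (1,2,-1),\overrightarrow{\mathbb{P}_l}\rangle$, and the entire claim reduces to showing $\mathbb{P}_l + 2\mathbb{P}_{l+1} - \mathbb{P}_{l+2} = 0$.

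First I would invoke Theorem (the cross-product identity) to write
\begin{equation*}
\overrightarrow{\mathbb{P}_n}\times\overrightarrow{\mathbb{P}_m} = (-1)^{m+1}\,P_{n-m}\,e_p\,(i+2j-k).
\end{equation*}
Then I would expand the inner product with $\overrightarrow{\mathbb{P}_l} = (\mathbb{P}_l,\mathbb{P}_{l+1},\mathbb{P}_{l+2})$ to obtain
\begin{equation*}
\langle\overrightarrow{\mathbb{P}_n}\times\overrightarrow{\mathbb{P}_m},\overrightarrow{\mathbb{P}_l}\rangle = (-1)^{m+1}\,P_{n-m}\,e_p\,\bigl(\mathbb{P}_l + 2\,\mathbb{P}_{l+1} - \mathbb{P}_{l+2}\bigr).
\end{equation*}
Finally, the recurrence (\ref{2-1}) gives $\mathbb{P}_{l+2} = 2\,\mathbb{P}_{l+1} + \mathbb{P}_l$, so the parenthetical factor vanishes and the mixed product is zero.

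There is essentially no obstacle here: all the heavy lifting was already done in the previous theorem, where the direction of the cross product was pinned down to the single vector $(1,2,-1)$. The only thing to notice is that this direction is precisely orthogonal to every generalized Pell vector by the defining recurrence, which is what forces the scalar triple product to vanish regardless of the indices $n,m,l$ or the parameters $p,q$. Geometrically, this also means all generalized Pell vectors lie in a common plane (the plane with normal $(1,2,-1)$).
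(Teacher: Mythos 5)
Your argument is correct and is essentially the paper's own proof: the paper expands the $3\times 3$ determinant and uses the same $2\times 2$ products (its equations for $\mathbb{P}_{m+2}\mathbb{P}_{n+1}-\mathbb{P}_{m+1}\mathbb{P}_{n+2}$, etc.) to pull out the factor $(-1)^{l+1}P_{m-l}\,e_p$ times $\mathbb{P}_{n}+2\,\mathbb{P}_{n+1}-\mathbb{P}_{n+2}$, which vanishes by the recurrence. You simply cite the already-proved cross-product formula and let the recurrence annihilate $\mathbb{P}_{l}+2\,\mathbb{P}_{l+1}-\mathbb{P}_{l+2}$ instead, which is the same idea up to which vector carries the vanishing factor.
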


\begin{proof}
Using  
$\overrightarrow {{\mathbb{P}_{l}}}=({\mathbb{P}_{l}},\,{\mathbb{P}_{l+1}},\,{\mathbb{P}_{l+2}})$, we can write
\\ \,
\begin{equation}\label{2-45} 
\begin{array}{rl}
\left\langle {{\overrightarrow {{\mathbb{P}_{n}}}} \times {\overrightarrow {{\mathbb{P}_{m}}}}\;,\; \,{\overrightarrow{{\mathbb{P}_{l}}}}} \right\rangle
=&\begin{vmatrix}
	{\mathbb{P}_{n}} & {\mathbb{P}_{n+1}} & {\mathbb{P}_{n+2}} \\
	{\mathbb{P}_{m}} & {\mathbb{P}_{m+1}} & {\mathbb{P}_{m+2}}  \\
	{\mathbb{P}_{l}} & {\mathbb{P}_{l+1}} & {\mathbb{P}_{l+2}}  \\
\end{vmatrix} \\ \\
=&\,{\mathbb{P}_{n}}\,({\mathbb{P}_{m+1}}\,{\mathbb{P}_{l+2}}-{\mathbb{P}_{m+2}}\,{\mathbb{P}_{l+1}}) \\
&+{\mathbb{P}_{n+1}}\,({\mathbb{P}_{m+2}}\,{\mathbb{P}_{l}}-{\mathbb{P}_{m}}\,{\mathbb{P}_{l+2}}) \\
&+{\mathbb{P}_{n+2}}\,({\mathbb{P}_{m}}\,{\mathbb{P}_{l+1}}-{\mathbb{P}_{m+1}}\,{\mathbb{P}_{l}}) \,. \\ \,
\end{array}
\end{equation}
Then using the equations (\ref{2-41}), (\ref{2-42}) and (\ref{2-43}) we obtain

\begin{equation*}\label{2-46}
\begin{array}{rl}
{\mathbb{P}_{n}}\,({\mathbb{P}_{m+1}}\,{\mathbb{P}_{l+2}}-{\mathbb{P}_{m+2}}\,{\mathbb{P}_{l+1}})&+ \, {\mathbb{P}_{n+1}}\,({\mathbb{P}_{m+2}}\,{\mathbb{P}_{l}}-{\mathbb{P}_{m}}\,{\mathbb{P}_{l+2}}) \\ 
&+ \, {\mathbb{P}_{n+2}}\,({\mathbb{P}_{m}}\,{\mathbb{P}_{l+1}}-{\mathbb{P}_{m+1}}\,{\mathbb{P}_{l}}) \\
=& (-1)^{l+1}\,{P_{m-l}}\,({p^2-2\,p\,q-\,q^2})\, \\
&(\,{\mathbb{P}_{n}} + 2\,{\mathbb{P}_{n+1}} - {\mathbb{P}_{n+2}}\,) \\
=& (-1)^{l+1}\,{P_{m-l}}\,\,{e_{p}}\,(\,{\mathbb{P}_{n+2}} -\,{\mathbb{P}_{n+2}}\,) \\
=& 0.  
\end{array}
\end{equation*} 
Thus, we have the equation (\ref{2-44}).
\end{proof}

\section{The Generalized Complex Pell Sequence}
In this section, we define the generalized complex Pell sequence denoted by ${\mathbb{C}_{n}}$.

The generalized complex Pell sequence defined by
\begin{equation}\label{3-1}
{\mathbb{C}_{n}} = {\mathbb{P}_{n }} + i \,{\mathbb{P}_{n + 1}}\;,\,\, i^2=-1 ,\,\, i\ne{0}
\end{equation}
with ${\mathbb{C}_1} = p\,+i\,(2\,p + q) \,,\;{{C}_2} = (2\,p\,+\,q)+i\,(5\,p + 2\,q),\;{\mathbb{C}_3} = (5\,p + 2\,q)\,+\,i\,(12\,p + 5\,q)$,... \,
where $p,q$ are arbitrary integers.
\begin{equation}\label{3-3}
 \begin{aligned}
\begin{array}{lll}
  \mathbb{C}_{n} = (\,p\,- 2\,q +\,i\,q\,)\,{P_{n}} + (\,\,q\, +\,i\,p\,)\,{P_{n + 1}},   \\
\\ 
  \mathbb{C}_{n + 1} = (\,q\,+\,i\,p\,)\,{P_{n}} + [\,p\,+ \,i\,(\,2\,p\,+\,q\,)\,]\,{P_{n + 1}},   \\
\\
  \mathbb{C}_{n + 2} = [\,p\, +\,i\,(\,2\,p\,+\,q\,)\,]\,{P_{n}} + [\,(2\,p\,+\,q\,)+\,i\,(5\,p\,+\,2\,q\,)\,]{P_{n + 1}},
  \end{array}
   \end{aligned}
   \end{equation} 	
\,\,\,\,\,\,\, \vdots \\ \\
where $p, q$ are arbitrary integers. That is, the generalized complex Pell sequence is 
\begin{equation}\label{3-2}
\begin{aligned}
\begin{array}{ll}
p\,+i\,(2\,p\,+\,q\,),\,\,(2\,p +\,q\,) +\,i\,(5\,p + 2\,q\,),\,\,(5\,p + 2\,q\,)\,+\,i\,(12\,p + 5\,q\,),\\  \\
(12\,p + 5\,q)+i\,(29\,p + 12\,q\,),\ldots ,(\,p +\,i\,(2\,p +\, q\,)){P_n} + (\,q +i\,p\,)\,){P_{n - 1}}, \ldots 
\end{array}
\end{aligned}
\end{equation}
\\ 
\textbf{Special Case-1:} From the generalized complex Pell sequence \,($\mathbb{C}_n$) for \,\,$\;p = 1,\;\,q = 0$\, in the equation (\ref{3-2}), we obtain as follows: 
$$ ({C_p})\;:\,\;1 + i\,2 \,,\,\,2 + i\,5 \,,\,\,5 + i\,12 \,,\,\,12 + i\,29 , \ldots \,,(1 + i\,2\,)\,{P_n} + i\,{P_{n - 1}}, \ldots  $$ \\
\textbf{Special Case-2:} From the generalized complex Pell sequence \,($\mathbb{C}_n$) for \,$\;\,p = 2,\;\,q = 2$\, in the equation (\ref{3-2}), we obtain following another special sequence: 
$$ ({C_q})\;:\,\;2 + i\,2 \,,\,\,2 + i\,6 \,,\,\,6 + i\,14 \,,\,\,14 + i\,34 , \ldots \,,(1 + i\,2\,)\,{Q_n} + i\,{Q_{n - 1}}, \ldots  $$ \\
From the  equations (3.1), (3.2) and (3.3), we get the following properties for the generalized complex Pell sequence:
\begin{equation}
\mathbb{C}_{n}^2\, +\, \mathbb{C}_{n + 1}^2 = [\,(\,2\,p - 2\,q\,) + i\,( 2\,p + 2\,\,q\,)\,]\,\mathbb{C}_{2\,n + 1}\,- (\,2\, + 2\,i\,)\,{e_{P}}\,{P_{2\,n + 1}}\,\,,
\end{equation} \,
\begin{equation}
\mathbb{C}_{n + 1}^2\, -\, \mathbb{C}_{n - 1}^2 = 2\,[\,(\,2\,p - 2\, q\,) + i\,( 2\,p + 2\,\,q\,)\,]\,\mathbb{C}_{2\,n}\,- 2\,(\,2\, + 2\,i\,)\,{e_{P}}\,{P_{2\,n}}\,\,,
\end{equation} \,
\begin{equation}
\mathbb{C}_{n + 1}^2\, + \,(2 + 2\,i)\,{e_{P}}\,{P_{n}^2} = [\,(\,1 + 2\,i\,)\,\mathbb{C}_{2\,n + 1}\,\,,\;\;\;\;\;\;\;\;
\end{equation} \,
\begin{equation}
\mathbb{C}_{n - 1}\,\mathbb{C}_{n + 1} - \mathbb{C}_{n}^2 = {( - 1)^n}\,(2 + 2\,i\,)\,\,e_{P}\,\,, \;\;\;\;\;\;\;\;\;\;\;\;\;\;\;\;
\end{equation} \,
\begin{equation}
\mathbb{C}_{m}\,\mathbb{C}_{n + 1} + \mathbb{C}_{m - 1}\,\mathbb{C}_{n} = (2\,i - 2)\,[(2\,p + 2\,q\,)R_{m+n}\,+\,e_{P}\,{P}_{m+n-1}]\,\,,
\end{equation} \, 
\begin{equation}
\mathbb{C}_{n}\,\mathbb{C}_{n + r + 1} -\,\mathbb{C}_{n - s}\,\mathbb{C}_{n + r + s + 1} = {( - 1)^{n + s}\,(2 + 2\,i)\, e_{p}\,{P_{s}\,{P_{r + s + 1}}}}\,,\;\;\;\;
\end{equation} \,
\begin{equation}
\mathbb{C}_{m}\,\mathbb{C}_{n+1} - \mathbb{C}_{m+1}\,\mathbb{C}_{n} = (-1)^n\,{P_{m-n}}\,(2 + 2\,i\,)\,{e_{P}}\,\,,\;\;\;\;\;\;\;\;
\end{equation} \,
\begin{equation}
\quad \quad \mathbb{C}_{n + 1 - r}\,\mathbb{C}_{n + 1 + r} - \mathbb{C}_{n + 1}^2 = (-1)^{n - r}\,(2 + 2\,i\,)\,\,e_{P}\,{P}_{r}^2\,\,,
\;\;\;\;\;\;\;\;\;\;\;\;\;\;\;\;\;\;\;\;\;\;\;
\end{equation} \,
\begin{equation}
\frac{\mathbb{C}_{n + r} + {{( - 1)}^r}\mathbb{C}_{n - r}} {{\mathbb{C}_{n}}} = {Q_{r}}\,,
\end{equation} \,
\\
where ${{e}_{p}} = {p}^{2} - 2\, p\,q -\,{q^2} $.\\
\subsection{The Generalized Complex Pell Vectors}
 A generalized dual Pell vector is defined by
$$
\overrightarrow {{{\rm\mathbb{C}}_{\rm{n}}}} = (\mathbb{C}_n\,,\,\mathbb{C}_{n + 1},\,\mathbb{C}_{n + 2}).
$$
From  equations (\ref{3-1}) and (\ref{3-2}), it can be expressed as
\begin{equation}\label{3-13}
\begin{aligned}
\begin{array}{lll}
\overrightarrow {{{\rm\mathbb{C}}_{\rm{n}}}}&=\overrightarrow {{\mathbb{P}_{n}}}+\,i\,\overrightarrow {{\mathbb{P}_{n+1}}}\\
&=(p - 2\,q + \,i\,q)\overrightarrow {{\rm{P}}}_{n}+ (q + \,i\,p)\overrightarrow {{\rm{P}}}_{\rm{n+1}},
\end{array}
\end{aligned}
\end{equation}
where $\overrightarrow {{\rm\mathbb{C}}}_{\rm{n}}= (\,\rm\mathbb{C}_{n}\,,\,\rm\mathbb{C}_{n + 1},\,\rm\mathbb{C}_{n + 2})$ and $\overrightarrow {{\rm{P}}}_{\rm{n}}= (\,{P_n}\,,\,{P_{n + 1}},\,{P_{n + 2}})$ are the generalized complex Pell vector and the Pell vector, respectively. \\
\\
The product of $\overrightarrow {{{\rm\mathbb{C}}_{\rm{n}}}}$ and $ \lambda \in \mathbb{R}$ is given by 
$$ 	\lambda \,\overrightarrow {{{\rm\mathbb{C}}_{\rm{n}}}} =\lambda \, \overrightarrow {{\mathbb{P}_{n}}}+\,i\, \lambda\, \overrightarrow {{\mathbb{P}_{n+1}}} $$
and, \,
$\overrightarrow {{{\rm\mathbb{C}}_{\rm{n}}}}$ and $\overrightarrow {{{\rm\mathbb{C}}_{\rm{m}}}}$ are equal if and only if 
$$
\begin{aligned}
\begin{array}{lcl}
{\mathbb{P}_{n}}&=&{\mathbb{P}_{m}}, \\
{\mathbb{P}_{n + 1}} &= &{\mathbb{P}_{m + 1}}, \\
{\mathbb{P}_{n + 2}}&=&{\mathbb{P}_{m+2}}\,.
\end{array}
\end{aligned}
$$
\begin{theorem}
Let $\overrightarrow {{{\rm\mathbb{C}}_{\rm{n}}}}$ and $\overrightarrow {{{\rm\mathbb{C}}_{\rm{m}}}}$ be two generalized complex Pell vectors. The dot product of $\overrightarrow {{{\rm\mathbb{C}}_{\rm{n}}}}$ and $\overrightarrow {{{\rm\mathbb{C}}_{\rm{m}}}}$ is given by
\\
\begin{equation} \label{3-14}
\begin{aligned}
\begin{array} {lll}
\left\langle \overrightarrow {{{\rm\mathbb{C}}_{\rm{n}}}},\overrightarrow {{{\rm\mathbb{C}}_{\rm{m}}}} \right\rangle = {p^2}\,( {P_{n+m+1}} + {P_{n+m+3}}+{P_{n+m+5}} )\\
\quad\quad \quad \quad \quad \quad + 2\,p\,q\,({P_{n+m}} +{P_{n+m+2}}+{P_{n+m+4}} ) \\
\quad\quad \quad \quad \quad \quad + {q ^2}\,({P_{n+m-1}} +{P_{n+m+1}}+{P_{n+m+3}} ) \\
\quad\quad \quad \quad \quad \quad +(-1)^n\,i\,\,{P_{m-n}}\,\,e_{P} \\
\quad\quad \quad \quad \quad  = 7\,[\,{p^2}\,{P_{n+m+3}}+2\,p\,q\,{P_{n+m+2}}+\,{q ^2}\,{P_{n+m+1}}\,] \\
\quad\quad \quad \quad \quad \quad +(-1)^n\,i\,\,{P_{m-n}}\,\,e_{P}.
\end{array}
\end{aligned}
\end{equation}
\end{theorem}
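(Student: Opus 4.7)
The plan is to start from the dot product expansion
$$\langle \overrightarrow{\mathbb{C}_n}, \overrightarrow{\mathbb{C}_m}\rangle = \sum_{k=0}^{2} \mathbb{C}_{n+k}\overline{\mathbb{C}_{m+k}},$$
substitute $\mathbb{C}_j = \mathbb{P}_j + i\mathbb{P}_{j+1}$ in each factor, and split the $k$-th summand into
$$\bigl(\mathbb{P}_{n+k}\mathbb{P}_{m+k} + \mathbb{P}_{n+k+1}\mathbb{P}_{m+k+1}\bigr) + i\bigl(\mathbb{P}_{n+k+1}\mathbb{P}_{m+k} - \mathbb{P}_{n+k}\mathbb{P}_{m+k+1}\bigr).$$
Real and imaginary parts can then be handled independently.

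For the imaginary component I would invoke the generalized Pell identity $\mathbb{P}_a \mathbb{P}_{b+1} - \mathbb{P}_{a+1}\mathbb{P}_b = (-1)^b e_p P_{a-b}$ already established in Section~2 (applied with $a=m+k$ and $b=n+k$). Each $k$ then contributes a term proportional to $(-1)^{n+k} e_p P_{m-n}$, and the alternating sum over $k=0,1,2$ collapses via $1-1+1$ to give exactly $(-1)^n i\,P_{m-n}\, e_p$, matching the stated imaginary term.

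The real part is handled by substituting $\mathbb{P}_j = pP_j + qP_{j-1}$ throughout and collecting by powers of $p$ and $q$. Using the Pell identity $P_aP_b + P_{a+1}P_{b+1} = P_{a+b+1}$, which follows by setting $m=a+1$, $n=b$ in the first formula of (1.1), the sum $\mathbb{P}_{n+k}\mathbb{P}_{m+k} + \mathbb{P}_{n+k+1}\mathbb{P}_{m+k+1}$ collapses for each $k$ to
$$p^2 P_{n+m+2k+1} + 2pq\,P_{n+m+2k} + q^2 P_{n+m+2k-1}.$$
Summing over $k=0,1,2$ reproduces the triples $P_{n+m+1}+P_{n+m+3}+P_{n+m+5}$, $P_{n+m}+P_{n+m+2}+P_{n+m+4}$, and $P_{n+m-1}+P_{n+m+1}+P_{n+m+3}$, which is precisely the intermediate expression in the statement.

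The final closed form then follows from the observation $P_{r-2}+P_r+P_{r+2} = 7P_r$, immediate from $P_{r+2}=5P_r+2P_{r-1}$ and $P_{r-2}=P_r-2P_{r-1}$ via the defining recurrence; applied to each of the three triples it produces $7p^2 P_{n+m+3}$, $14pq\,P_{n+m+2}$, and $7q^2 P_{n+m+1}$. The main obstacle will be bookkeeping: the expansion produces twelve separate Pell products, and recognizing the pattern so that each pair fits the template $P_aP_b+P_{a+1}P_{b+1}=P_{a+b+1}$ (together with the sign convention for the imaginary part, which requires handling $P_{m-n}$ when $m<n$ through $P_{-r}=(-1)^{r+1}P_r$) is where the care is needed; the underlying algebra is routine once that pairing is in place.
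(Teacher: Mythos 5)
Your proposal is correct and follows essentially the same route as the paper: expand $\left\langle \overrightarrow{\mathbb{C}_n},\overrightarrow{\mathbb{C}_m}\right\rangle$ termwise as $\sum_{k=0}^{2}\mathbb{C}_{n+k}\bar{\mathbb{C}}_{m+k}$, reduce each product to Pell numbers (real part via $P_aP_b+P_{a+1}P_{b+1}=P_{a+b+1}$, imaginary part via the $(-1)^{n+k}e_pP_{m-n}$ identity so that the signs collapse as $1-1+1$), and sum. You merely supply more of the intermediate bookkeeping than the paper, including an explicit justification of the final factor-$7$ collapse $P_{r-2}+P_r+P_{r+2}=7P_r$, which the paper leaves implicit.
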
	

\begin{proof}
The dot product of $\;$
$\overrightarrow {{{\rm\mathbb{C}}_{\rm{n}}}} = (\mathbb{C}_n\,,\,\mathbb{C}_{n + 1},\,\mathbb{C}_{n + 2})$ and \, $\overrightarrow {{{\rm\mathbb{C}}_{\rm{m}}}} = (\mathbb{C}_m\,,\,\mathbb{C}_{m + 1},\,\mathbb{C}_{m+ 2})$ defined by 
$$\begin{aligned}
\begin{array} {lll}
\left\langle \overrightarrow {{{\rm\mathbb{C}}_{\rm{n}}}},\overrightarrow {{{\rm\mathbb{C}}_{\rm{m}}}} \right\rangle & = \mathbb{C}_n \bar{\mathbb{C}}_m+\mathbb{C}_{n+1} \bar{\mathbb{C}}_{m+1}+\mathbb{C}_{n+2} \bar{\mathbb{C}}_{m+2}.\\
\end{array}
\end{aligned}$$	
From the equations (\ref{3-1}), (\ref{3-2}) and (\ref{3-3}), we obtain
\begin{equation} \label{3-15}
\begin{aligned}
\begin{array} {lll}
{{{\rm\mathbb{C}}}_{\rm{n}}} {{\bar{\rm\mathbb{C}}}_{\rm{m}}} = {p^2}( {P_{n+m+1}} )+2\, p\,q\,({P_{n+m}} ) \\
\quad\quad \quad \quad \quad \quad + {q ^2}\,({P_{n+m-1}})+(-1)^n\,i\,{P_{m-n}}\,\,e_{P},
\end{array}
\end{aligned}
\end{equation}
\begin{equation} \label{3-16}
\begin{aligned}
\begin{array} {lll}
{{{\rm\mathbb{C}}}_{\rm{n+1}}}{{\bar{\rm\mathbb{C}}}_{\rm{m+1}}} = {p^2}( {P_{n+m+3}} )+2\,p\,q\,(2\,{P_{n+m+2}} ) \\
\quad\quad \quad \quad \quad \quad \quad  + {q^2} ( {P_{n+m+1}} )+(-1)^{n+1}\,i\,{P_{m-n}}\,\,e_{P},\,
\end{array}
\end{aligned}
\end{equation}
and
\begin{equation} \label{3-17}
\begin{aligned}
\begin{array} {lll}
{{{\rm\mathbb{C}}}_{\rm{n+2}}}{{\bar{\rm\mathbb{C}}}_{\rm{m+2}}} = {p^2}( {P_{n+m+5}} )+2\,p\,q\,({P_{n+m+4}}) \\
\quad\quad \quad \quad \quad \quad \quad  + {q ^2}\,({P_{n+ m+3}}) +(-1)^{n}\,i\,{P_{m-n}}\,\,e_{P}.
\end{array}
\end{aligned}
\end{equation} \\
Then from equations (\ref{3-15}), (\ref{3-16}) and (\ref{3-17}), we have the equation (\ref{3-14}).
\end{proof}
\noindent\textbf{Special Case-3:} For the dot product of generalized complex Pell vectors $\overrightarrow {{{\rm\mathbb{C}}_{\rm{n}}}}$ and $\overrightarrow {{\rm\mathbb{C}}}_{\rm{n+1}}$, we get
\begin{equation} \label{3-18}
\begin{aligned}
\begin{array}{lll}
\left\langle \overrightarrow {{{\rm\mathbb{C}}_{\rm{n}}}}, \overrightarrow {{{\rm\mathbb{C}}_{\rm{n+1}}}} \right\rangle&= \mathbb{C}_n \bar{\mathbb{C}}_{n+1}+\mathbb{C}_{n+1}\bar{\mathbb{C}}_{n+2}+\mathbb{C}_{n+2}\bar{\mathbb{C}}_{n+3}\\
& = {p^2}\,({P_{2n + 2}}+{P_{2n + 4}}+{P_{2n + 6}} ) \\
&\quad+ 2\,p\,q\, ({P_{2n+1}}+{P_{2n + 3}}+{P_{2n + 5}} ) \\
&\quad+ {q^2} ({P_{2n}}+{P_{2n + 2}}+{P_{2n + 4}} )\\
&\quad+\,i\,\{ (-1)^{n}\,e_{P} \} \\
& = 7\,[\,p^2\,{P_{2n+4}}+2\,p\,q\,{P_{2n+3}}+\,q^2\,{P_{2n+2}}\,] \\
&\quad+(-1)^n\,i\,\,e_{P}
\end{array}	
\end{aligned}
\end{equation}

and \\
\begin{equation}\label{3-19}
\begin{aligned}
\begin{array}{lll}
\left\langle {{\overrightarrow {{\rm\mathbb{C}}}_{\rm{n}}}}, {{\overrightarrow {{\rm\mathbb{C}}}_{\rm{n}}}} \right\rangle  & = \mathbb{C}_n \bar{\mathbb{C}}_{n}+\mathbb{C}_{n+1}\bar{\mathbb{C}}_{n+1}+\mathbb{C}_{n+2}\bar{\mathbb{C}}_{n+2}\\
& =|{\mathbb{C}_n}|^2 +|{\mathbb{C}_{n+1}|^2}+{|\mathbb{C}_{n+2}}|^2\\
&  =  {p^2}({P_{2n + 1}}+{P_{2n + 3}}+{P_{2n + 5}}) + 2p\,q\,({P_{2n}}+{P_{2n + 2}}+{P_{2n + 4}}) \\
&\quad + {q^2}({P_{2n - 1}}+{P_{2n + 1}}+{P_{2n + 3}}) \\
&= 7\,[\,{p^2}\,{P_{2n+3}}+2\,p\,q\,{P_{2n+2}}+\,{q ^2}\,{P_{2n+1}}\,] .\\
\end{array}	
\end{aligned}
\end{equation}
\\
Then for the norm of the generalized complex Pell vector, we have \\ \,
\\
\begin{equation}\label{3-20}	
\begin{aligned}
\begin{array}{lll}
\left\| {{\overrightarrow {{{\rm\mathbb{C}}_{\rm{n}}}}}} \right\|& = \sqrt[]{\left[\left\langle \overrightarrow {{{\rm\mathbb{C}}_{\rm{n}}}},\overrightarrow {{{\rm\mathbb{C}}_{\rm{n}}}} \right\rangle\right]}= \sqrt[]{{|{\mathbb{C}_n}|^2} +|{\mathbb{C}_{n+1}|^2}+{|\mathbb{C}_{n+2}}|^2} \\
&=\sqrt[]{ 7\,[\,{{p^2}{P_{2n + 3}} + 2\,p\,q\,{P_{2n + 2}} + {q^2}\,{P_{2n + 1}}\,] }}.
\end{array}
\end{aligned}
\end{equation}
\\	
\textbf{Special Case-4:} For $p=1,\, q=0$, in the equations (\ref{3-14}), (\ref{3-18}) and (\ref{3-20}), we have \\

$ \left\langle {{{\overrightarrow C }_n},{{\overrightarrow C }_m}} \right\rangle = 7\,({P_{n + m + 1}})\, + \,i\, (-1)^n \,{P_{m-n}}\,, $

$ \left\langle {{{\overrightarrow C }_n},{{\overrightarrow C }_{n + 1}}} \right\rangle = 7\,({P_{2n + 4}})+\,i\,(-1)^{n} $\\

and \\

$\left\|{{{\overrightarrow C }_n}} \right\| = {\sqrt[]{  {7\,({P_{2n + 3}}) }}}.$ \\
\begin{theorem}
Let ${\overrightarrow {{{\rm\mathbb{C}}_{\rm{n}}}}}$  and ${\overrightarrow {{{\rm\mathbb{C}}_{\rm{m}}}}}$  be two generalized complex Pell vectors. The cross product of ${\overrightarrow {{{\rm\mathbb{C}}_{\rm{n}}}}}$  and ${\overrightarrow {{{\rm\mathbb{C}}_{\rm{m}}}}}$  is given by
\\
\begin{equation}\label{3-21}
{\overrightarrow {{{\rm\mathbb{C}}_{\rm{n}}}}} \times {\overrightarrow {{{\rm\mathbb{C}}_{\rm{m}}}}} = {( - 1)^{n}}{P_{m - n}}\,(2 + 2\,i )\,e_{P}\,(( i + 2\,j - k).
\end{equation}
\end{theorem}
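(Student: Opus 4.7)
The plan is to reduce this cross product to the real-vector cross product formula already established in Theorem~3, by exploiting the natural decomposition $\overrightarrow{\mathbb{C}_n} = \overrightarrow{\mathbb{P}_n} + i\,\overrightarrow{\mathbb{P}_{n+1}}$ coming from $\mathbb{C}_n = \mathbb{P}_n + i\,\mathbb{P}_{n+1}$. Since the cross product is $\mathbb{R}$-bilinear and $i$ is a formal scalar commuting with $\times$, expanding gives
$$\overrightarrow{\mathbb{C}_n}\times\overrightarrow{\mathbb{C}_m} = \overrightarrow{\mathbb{P}_n}\times\overrightarrow{\mathbb{P}_m} + i\,\overrightarrow{\mathbb{P}_n}\times\overrightarrow{\mathbb{P}_{m+1}} + i\,\overrightarrow{\mathbb{P}_{n+1}}\times\overrightarrow{\mathbb{P}_m} - \overrightarrow{\mathbb{P}_{n+1}}\times\overrightarrow{\mathbb{P}_{m+1}},$$
where the last sign comes from $i^{2}=-1$.

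Next I would apply Theorem~3 to each of the four terms. All four outputs are scalar multiples of the common vector $(i+2j-k)\,e_{P}$, so pulling out the common factor $(-1)^{m+1}(i+2j-k)\,e_{P}$ leaves the scalar bracket
$$P_{n-m} - i\,P_{n-m-1} + i\,P_{n-m+1} + P_{n-m}.$$
The key collapsing identity is the elementary Pell recurrence $P_{k+1}-P_{k-1}=2\,P_{k}$, which is immediate from $P_{k+1}=2P_{k}+P_{k-1}$. This reduces the bracket to $2\,P_{n-m}+2i\,P_{n-m}=(2+2i)\,P_{n-m}$, so
$$\overrightarrow{\mathbb{C}_n}\times\overrightarrow{\mathbb{C}_m} = (-1)^{m+1}(2+2i)\,P_{n-m}\,(i+2j-k)\,e_{P}.$$

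To finish, I would reconcile this with the indexing in the stated theorem using the negative-index identity $P_{-k}=(-1)^{k+1}P_{k}$ for the Pell sequence (verified directly from the backward recurrence $P_{k-1}=P_{k+1}-2P_{k}$), i.e.\ $P_{m-n}=(-1)^{n-m+1}P_{n-m}$. This yields $(-1)^{m+1}P_{n-m}=(-1)^{n}P_{m-n}$, producing the announced form. The main obstacle is simply the sign bookkeeping: keeping straight the four $(-1)^{m+1}$ and $(-1)^{m+2}$ factors produced by Theorem~3 (two of which interact with the $i^{2}=-1$ from the expansion), and reconciling the resulting exponent on $-1$ and the Pell index with the $(-1)^{n}P_{m-n}$ appearing in the target formula through the negative-index convention.
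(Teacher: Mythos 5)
Your proposal is correct, and your sign bookkeeping checks out: applying Theorem 3 to the four terms gives the bracket $P_{n-m}-i\,P_{n-m-1}+i\,P_{n-m+1}+P_{n-m}$, the recurrence $P_{k+1}-P_{k-1}=2P_k$ collapses it to $(2+2i)P_{n-m}$, and the negative-index identity $P_{m-n}=(-1)^{n-m+1}P_{n-m}$ indeed converts $(-1)^{m+1}P_{n-m}$ into $(-1)^{n}P_{m-n}$, which matches the stated formula (and a numerical check at $p=1,q=0$, $n=2$, $m=1$ confirms it). However, your route differs from the paper's proof of this theorem: the paper expands the formal determinant with the complex entries $\mathbb{C}_n$ and evaluates the three $2\times 2$ minors directly from the identity $P_mP_{n+1}-P_{m+1}P_n=(-1)^nP_{m-n}$, obtaining closed forms such as $\mathbb{C}_{m+2}\mathbb{C}_{n+1}-\mathbb{C}_{m+1}\mathbb{C}_{n+2}=2(-1)^{n}(1+i)P_{m-n}\,e_P$, and then assembles them; you instead decompose $\overrightarrow{\mathbb{C}_n}=\overrightarrow{\mathbb{P}_n}+i\,\overrightarrow{\mathbb{P}_{n+1}}$ and reuse Theorem 3 by bilinearity. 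Amusingly, your strategy is exactly the one the paper itself adopts for the dual analogue (Theorem 10), where $\varepsilon^2=0$ kills the fourth term; in the complex case that term survives and is what forces your use of the recurrence and the negative-index identity. Your approach buys modularity (no re-derivation of the minor identities, and the argument transfers verbatim to other such extensions), at the cost of the extra index/sign reconciliation; the paper's direct computation yields the individual minor identities (3.23)--(3.25) as byproducts. One point worth making explicit if you write this up: the paper's complex dot product uses conjugation, but its cross product (3.22) is the plain (unconjugated) formal determinant, so your $\mathbb{C}$-bilinear expansion with $i^2=-1$ is consistent with the paper's definition.
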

\begin{proof}
The cross product of $\overrightarrow {{{\rm\mathbb{C}}_{\rm{n}}}}=\overrightarrow {{\mathbb{P}_{n}}}+\,i\,\overrightarrow {{\mathbb{P}_{n+1}}}$ and  $\overrightarrow {{{\rm\mathbb{C}}_{\rm{m}}}}=\overrightarrow {{\mathbb{P}_{m}}}+\,i\,\overrightarrow {{\mathbb{P}_{m+1}}}$ defined by
 
\begin{equation}\label{3-22} 
\begin{array}{rl}
{\overrightarrow {{{\rm\mathbb{C}}_{\rm{n}}}}} \times {\overrightarrow {{{\rm\mathbb{C}}_{\rm{m}}}}} = 
&\begin{vmatrix}
	i & j & k \\
	{{\rm\mathbb{C}}_{n}} & {{\rm\mathbb{C}}_{n+1}} & {{\rm\mathbb{C}}_{n+2}}  \\
	{{\rm\mathbb{C}}_{m}} & {{\rm\mathbb{C}}_{m+1}} & {{\rm\mathbb{C}}_{m+2}}  \\
\end{vmatrix} \\ \\
=&\,i\,({{\rm\mathbb{C}}_{m+2}}{{\rm\mathbb{C}}_{n+1}}-{{\rm\mathbb{C}}_{m+1}}{{\rm\mathbb{C}}_{n+2}}) \\
&-j\,({{\rm\mathbb{C}}_{m+2}}{{\rm\mathbb{C}}_{n}}-{{\rm\mathbb{C}}_{m}}{{\rm\mathbb{C}}_{n+2}}) \\
&+k\,({{\rm\mathbb{C}}_{m+1}}{{\rm\mathbb{C}}_{n}}-{{\rm\mathbb{C}}_{m}}{{\rm\mathbb{C}}_{n+1}}) \,.
\end{array}
\end{equation} \,
\\
Using the property $P_m P_{n+1}-P_{m+1}P_n=(-1)^n P_{m-n}$, we get 

\begin{equation}\label{3-23}
{{\rm\mathbb{C}}_{m+2}}{{\rm\mathbb{C}}_{n+1}}-{{\rm\mathbb{C}}_{m+1}}{{\rm\mathbb{C}}_{n+2}}=2\,(-1)^{n}\,(1+i)\,{P_{m-n}}\,({p^2}-2\,p\,q-{q^2}),
\end{equation} \,
\begin{equation}\label{3-24}
{{\rm\mathbb{C}}_{m+2}}{{\rm\mathbb{C}}_{n}}-{{\rm\mathbb{C}}_{m}}{{\rm\mathbb{C}}_{n+2}}=4\,(-1)^{n}\,(1+i)\,{P_{m-n}}\,({p^2}-2\,p\,q-{q^2})
\end{equation} \\
and \\
\begin{equation}\label{3-25}
{{\rm\mathbb{C}}_{m+1}}{{\rm\mathbb{C}}_{n}}-{{\rm\mathbb{C}}_{m}}{{\rm\mathbb{C}}_{n+1}}=2\,(-1)^{n+1}\,(1+i)\,{P_{n-m}}\,({p^2}-2\,p\,q-{q^2}). 
\end{equation}
\par \,
Then from the equations (\ref{3-23}), (\ref{3-24}) and (\ref{3-25}), we obtain the equation (\ref{3-21}).\\ \\ \,
\textbf{Special Case-5:} For $p=1,\,q=0$ in the equation (\ref{3-19}), we have 
$${\overrightarrow {{{\rm\mathbb{C}}_{\rm{n}}}}} \times {\overrightarrow {{{\rm\mathbb{C}}_{\rm{m}}}}}=(-1)^{n} P_{m-n}\,(2+2\,i) (i+2j-k). $$
\end{proof}
\begin{theorem}
Let ${\overrightarrow {{{\rm\mathbb{C}}_{\rm{n}}}}}$, ${\overrightarrow {{{\rm\mathbb{C}}_{\rm{m}}}}}$  and ${\overrightarrow {{{\rm\mathbb{C}}_{\rm{k}}}}}$  be the generalized complex Pell vectors. The mixed product of these vectors is
\begin{equation}\label{3-26}
\left\langle {{\overrightarrow {{{\rm\mathbb{C}}_{\rm{n}}}}} \times {\overrightarrow {{{\rm\mathbb{C}}_{\rm{m}}}}}\;,\;{\overrightarrow {{{\rm\mathbb{C}}_{\rm{k}}}}}} \right\rangle  = 0 .
\end{equation}
\end{theorem}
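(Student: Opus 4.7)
The plan is to mimic, in the complex setting, the argument that worked for the real generalized Pell vectors in the earlier mixed-product theorem, exploiting the fact that the cross product of two generalized complex Pell vectors is a scalar multiple of the fixed vector $(1,2,-1)$.

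First I would expand the mixed product as a $3\times 3$ determinant with rows $\overrightarrow{\mathbb{C}_n},\overrightarrow{\mathbb{C}_m},\overrightarrow{\mathbb{C}_k}$, or equivalently expand along the top row of the determinant representation in (\ref{3-22}) after dotting with $\overrightarrow{\mathbb{C}_k}=(\mathbb{C}_k,\mathbb{C}_{k+1},\mathbb{C}_{k+2})$. This yields
\begin{equation*}
\left\langle \overrightarrow{\mathbb{C}_n}\times\overrightarrow{\mathbb{C}_m},\overrightarrow{\mathbb{C}_k}\right\rangle
=\mathbb{C}_k(\mathbb{C}_{m+1}\mathbb{C}_{n+2}-\mathbb{C}_{m+2}\mathbb{C}_{n+1})
+\mathbb{C}_{k+1}(\mathbb{C}_{m+2}\mathbb{C}_n-\mathbb{C}_m\mathbb{C}_{n+2})
+\mathbb{C}_{k+2}(\mathbb{C}_m\mathbb{C}_{n+1}-\mathbb{C}_{m+1}\mathbb{C}_n).
\end{equation*}

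Next I would substitute the three $2\times 2$ minor identities (\ref{3-23}), (\ref{3-24}), (\ref{3-25}), which were already established in the proof of the preceding theorem. All three are equal, up to sign and a factor of $2$, to $(-1)^{n}(1+i)P_{m-n}\,e_P$; more precisely, they factor as a common scalar $\lambda:=(-1)^{n}(1+i)P_{m-n}e_P$ times the weights $-2,\,4,\,-2$ respectively (matching the pattern $-(1)-(-2)+(-1)=\dots$ that produced the coefficient vector $(1,2,-1)$ in Theorem on the cross product). After factoring, the expression collapses to
\begin{equation*}
\left\langle \overrightarrow{\mathbb{C}_n}\times\overrightarrow{\mathbb{C}_m},\overrightarrow{\mathbb{C}_k}\right\rangle
= 2\lambda\bigl(\mathbb{C}_k+2\,\mathbb{C}_{k+1}-\mathbb{C}_{k+2}\bigr).
\end{equation*}

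Finally, the recurrence $\mathbb{C}_{k+2}=2\,\mathbb{C}_{k+1}+\mathbb{C}_k$ inherited from (\ref{2-1}) via (\ref{3-1}) gives $\mathbb{C}_k+2\,\mathbb{C}_{k+1}-\mathbb{C}_{k+2}=0$, so the whole mixed product vanishes, which is exactly (\ref{3-26}). The only step requiring care is the sign and factor bookkeeping in the minor identities (\ref{3-23})--(\ref{3-25}), since the complex factor $(1+i)$ and the differing coefficients $2,4,2$ must line up so that, when combined with the signs coming from the cofactor expansion, the $(1,2,-1)$ pattern emerges; this is the main (but purely mechanical) obstacle, and once it is handled the recurrence closes the argument immediately.
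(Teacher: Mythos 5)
Your overall plan is exactly the paper's: the paper simply substitutes its cross-product formula (\ref{3-21}) into the dot product, so that everything reduces to the factor $\left\langle (i+2j-k),\,\overrightarrow{\mathbb{C}_{k}}\right\rangle=\mathbb{C}_k+2\,\mathbb{C}_{k+1}-\mathbb{C}_{k+2}$, which vanishes by the recurrence; your determinant expansion plus the minor identities (\ref{3-23})--(\ref{3-25}) is the same argument with the cross-product theorem re-derived inline. The decisive idea (the cross product points along $(1,2,-1)$, and that direction is annihilated by $\mathbb{C}_{k+2}=2\,\mathbb{C}_{k+1}+\mathbb{C}_k$) is correctly identified.

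However, the explicit bookkeeping you give does not work as stated, and this is precisely the step you flagged as delicate. With $M_1=\mathbb{C}_{m+1}\mathbb{C}_{n+2}-\mathbb{C}_{m+2}\mathbb{C}_{n+1}$, $M_2=\mathbb{C}_{m+2}\mathbb{C}_{n}-\mathbb{C}_{m}\mathbb{C}_{n+2}$, $M_3=\mathbb{C}_{m}\mathbb{C}_{n+1}-\mathbb{C}_{m+1}\mathbb{C}_{n}$ and $\lambda=(-1)^n(1+i)P_{m-n}e_{P}$, the correct weights are $(M_1,M_2,M_3)=\lambda\,(-2,-4,+2)$, not $\lambda\,(-2,4,-2)$: then
\begin{equation*}
\mathbb{C}_kM_1+\mathbb{C}_{k+1}M_2+\mathbb{C}_{k+2}M_3=-2\lambda\,(\mathbb{C}_k+2\,\mathbb{C}_{k+1}-\mathbb{C}_{k+2})=0 .
\end{equation*}
With the weights you wrote, the sum would instead be $-2\lambda\,(\mathbb{C}_k-2\,\mathbb{C}_{k+1}+\mathbb{C}_{k+2})=-4\lambda\,\mathbb{C}_k$, which is not zero, and it is also not equal to the expression $2\lambda(\mathbb{C}_k+2\,\mathbb{C}_{k+1}-\mathbb{C}_{k+2})$ you claim it collapses to, so the middle display does not follow from your stated weights. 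Part of the trap is that the paper's own display (\ref{3-24}) carries a sign slip relative to (\ref{3-21}); a quick check with $p=1$, $q=0$, $n=1$, $m=2$ gives $\mathbb{C}_4\mathbb{C}_1-\mathbb{C}_2\mathbb{C}_3=4+4i$, the opposite of what (\ref{3-24}) as printed predicts, and likewise fixes the sign of $M_3$. Once the weights are corrected to the $(1,2,-1)$ pattern (up to the common factor $-2\lambda$), your argument closes exactly as the paper's does, so this is a fixable sign error rather than a wrong approach.
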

\begin{proof}
Using \,\,
$\overrightarrow {{{\rm\mathbb{C}}_{\rm{k}}}}=\overrightarrow {{\mathbb{P}_{k}}}+\,i\overrightarrow {{\mathbb{P}_{k+1}}}$,\, we can write\\ 
\begin{equation}\label{3-27} 
\begin{array}{rl}
\left\langle {{\overrightarrow {{{\rm\mathbb{C}}_{\rm{n}}}}} \times {\overrightarrow {{{\rm\mathbb{C}}_{\rm{m}}}}}\;,\; \,{\overrightarrow{{{\rm\mathbb{C}}_{\rm{k}}}}}} \right\rangle 
&=(-1)^n\,P_{m-n}\,e_{P}\,(2+2\,i)\left\langle (i+2j-k),\,\overrightarrow {{{\rm{C}}_{\rm{k}}}} \right\rangle. 
\end{array}
\end{equation}
\\
So we have 
$$
\left\langle (i+2j-k),\,\overrightarrow {{{\rm{C}}_{\rm{k}}}} \right\rangle= {C_{k}}+2\,{C_{k+1}}-{C_{k+2}}=0,
$$
$$\left\langle (i+2j-k),\,\overrightarrow {{{\rm{C}}_{\rm{k+1}}}} \right\rangle= {C_{k+1}}+2\,{C_{k+2}}-{C_{k+3}}=0 .$$
\\
Thus, we have the equation (\ref{3-26}).
\end{proof}

\section{The Generalized Dual Pell Sequence } 

\par In this section, we define the generalized dual Pell sequence denoted by  $(\mathbb{D}^P_n).$ \,
\par \,
The n-th term of the generalized dual Pell number defined by
\begin{equation}\label{4-1}
{\mathbb{D}^P_{n}} = {\mathbb{P}_{n}}+ \varepsilon\,{\mathbb{P}_{n+1}}.
\end{equation}
Using equations (\ref{2-1}), (\ref{2-2}) and (\ref{2-3}), we get
\begin{equation}\label{4-2}
{\mathbb{D}^P_{n}} = (\,p\,+\,\varepsilon\,(\,2\,p + q\,))\,{P_{n}} + (\,q\,+\,\varepsilon\,p\,){P}_{n - 1}\;,\;\;(n \ge 2)
\end{equation}
with ${\mathbb{D}^P_{0}} = q + \varepsilon\,p,\,\;{\mathbb{D}^P_{1}} = p + \varepsilon\,(2\,p\,+\,q\,),\,\;{\mathbb{D}^P_{2}} = (2\,p\,+\,q\,) + \,\varepsilon(5\,p\,+\,2\,q\,)$ \, where $\,{\varepsilon ^2} = 0,\,\,\,\varepsilon  \ne 0$ \,\cite{ercan}. \\ \\ 
That is, the generalized dual Pell sequence is 
\begin{equation}\label{4-3}
\begin{aligned}
\begin{array}{ll}
({\mathbb{D}^P_{n}}):\,\,\,q + \varepsilon\,p\,,\,\,p\,+\,\varepsilon\,(2p\,+\,q\,)\,,\,\,(2p\,+\,q\,) + \varepsilon\,(5p\,+\,2\,q\,), \\
\\
(5p\,+\,2q\,) + \varepsilon\,(\,12p\,+\,5q\,), \,\ldots\,,(\,p\,+\,\varepsilon\,(\,2\,p + q\,))\,{P_{n}} + (\,q\,+\,\varepsilon\,p\,){P}_{n - 1}\, \ldots
\end{array}
\end{aligned}
\end{equation}
Using the equation (\ref{4-1}) and (\ref{4-2}), we write

\begin{equation}\label{4-4}
 \begin{aligned}
\begin{array}{lll}
  {\mathbb{D}^P_{n}} = (\,p\,- 2\,q +\,\varepsilon\,\,q\,)\,{P_{n}} + (\,\,q\, + \varepsilon\,\,p\,)\,{P_{n + 1}},   \\
\\ 
  {\mathbb{D}^P_{n + 1}} = (\,q\,+\,\varepsilon\,\,p\,)\,{P_{n}} + [\,\,p\, + \varepsilon\,(2\,p\,+\,q\,)\,]\,{P_{n + 1}},   \\
\\
  {\mathbb{D}^P_{n + 2}} = [\,p\, + \varepsilon\,(\,2\,p\,+\,q\,)\,]\,{P_{n}} + [\,(2\,p\,+\,q\,)+\,\varepsilon\,(5\,p\,+\,2\,q\,)\,]{P_{n + 1}},
  \end{array}
   \end{aligned}
   \end{equation} 	
\,\,\,\,\,\,\, \vdots \\ \\
From the  equations (\ref{4-1}), (\ref{4-2}), (\ref{4-3}) and (\ref{4-4}), we get the following properties for the generalized dual Pell sequence:
\begin{equation}
({\mathbb{D}^P_{n}})^2 +\,({\mathbb{D}^P_{n-1}})^2  = [ (2\,p - 2\,q) +\varepsilon (2\,p + 2\,q)\,]\,{\mathbb{D}^P_{2n - 1}} - \,e_{P}\,(1 + 2\,\varepsilon )\,{P_{2n - 1}}\,,\;\;\;\;
\end{equation}
\begin{equation}
({\mathbb{D}^P_{n + 1}})^2 +\,({\mathbb{D}^P_n})^2   = [ (2\,p - 2\,q) +\varepsilon (2\,p + 2\,q)\,]\,{\mathbb{D}^P_{2n + 1}} - \,e_{P}\,(1 + 2\,\varepsilon )\,{P_{2n + 1}}\,,\;\;\;\;
\end{equation}
\begin{equation}
({\mathbb{D}^P_{n + 1}})^2 -\,({\mathbb{D}^P_{n - 1}})^2   = [ (4\,p - 4\,q) +\varepsilon (4\,p + 4\,q)\,]\,{\mathbb{D}^P_{2n}} - 2\,e_{P}\,(1 + 2\,\varepsilon )\,{P_{2n}}\,,\;\;\;\;
\end{equation}
\begin{equation}
{\mathbb{D}^P_{n - 1}}\,{\mathbb{D}^P_{n + 1}} - ({\mathbb{D}^P_n})^2 = {( - 1)^n}\, e_{P}\,(1 + 2\,\varepsilon )\,\,,\;\;\;\;
\end{equation}
\begin{equation}
{\mathbb{D}^P_{m}}\,{\mathbb{D}^P_{n + 1}} - {\mathbb{D}^P_{m+1}}\,{\mathbb{D}^P_{n}} = {( - 1)^n}\, e_{P}\,(1 + 2\,\varepsilon )\,P_{m-n}\,,\;\;\;\;
\end{equation}
\begin{equation}
({\mathbb{D}^P_{n + 1}})^2 + e_{P}\,(1 + 2\,\varepsilon )\,\,{P}_{n}^2 = \left( 1 + 2\,\varepsilon \,\right)\,{\mathbb{D}^P_{2n + 1}}\,,\;\;\;\;
\end{equation}
\begin{equation}
{\mathbb{D}^P_n}\,{\mathbb{D}^P_{n + r + 1}} - {\mathbb{D}^P_{n - s}}\,{\mathbb{D}_{n + r + s + 1}} = {( - 1)^{n + s}}\,e_{P}\,\,(1 + 2\, \varepsilon )\,{P_s}{P_{r + s + 1}}\,\,,\;\;\;\;
\end{equation}
\begin{equation}
{\mathbb{D}^P_{n + 1 - r}}\,{\mathbb{D}^P_{n + 1 + r}} - ({\mathbb{D}^P_{n + 1}})^2 = {( - 1)^{n - r}}\,e_{P}\,(1 + 2\,\varepsilon )\,\,{P}_{r}^2\,\,,\;\;\;\;
\end{equation}
\begin{equation}
\frac{{\mathbb{D}^P_{n + r}} + ( - 1)^r\,{\mathbb{D}^P_{n - r}}}  {{\mathbb{D}^P_n}} = {Q_r},
\end{equation}
where ${{e}_\mathbb{D}} = {e}_{P}\,(1 + 2\,\varepsilon).$ \\

\noindent\textbf{Special Case-1:} From the generalized dual Pell sequence \,($\mathbb{D}^P_n$)\, for $\;\,p = 1,\,\;q = 0$ in the equation (\ref{4-3}), we obtain dual Pell sequence ($D^P_n$) as follows: 
$$ ({D^P_n})\;:\,\;1 + \varepsilon\,2 \,,\,\,2 + \varepsilon\,5 \,,\,\,5 + \varepsilon\,12 ,\,\,12 + \varepsilon\,29 , \ldots \,,(1 + \varepsilon\,2 )\,{P_{n}} + \varepsilon \,{P_{n - 1}}, \ldots  $$ 
\begin{theorem}
	If \,${\mathbb{P}_{n}}$ and \,${\mathbb{D}^P_n}$ are the generalized Pell number and   generalized dual Pell number respectively, then
	$$\mathop {\lim }\limits_{n \to \infty } \frac{{\mathbb{P}_{n+1}}} {{\mathbb{P}_{n}}} = \frac{p\alpha+q} {\alpha\,q + ( p - 2\,q )}$$
	and
	$$\mathop {\lim }\limits_{n \to \infty } \frac{{\mathbb{D}^P_{n + 1}}} {{\mathbb{D}^P_n}} = \frac{(p\,q)\,\alpha^2 + (p^2 - 2p\,q + q^2)\,\alpha + (p\,q - 2q^2)}  {\alpha^2\,( q^2 ) + \alpha\,( 2p\,q - 4\,q^2 ) + (p^2 - 4p\,q + 4q^2)}.$$		
\end{theorem}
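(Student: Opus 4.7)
The first limit in the statement is precisely the conclusion of Theorem~1, so my plan is to invoke it verbatim and concentrate entirely on the dual-number limit.

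For the second limit, the plan is to exploit the defining decomposition $\mathbb{D}^P_n = \mathbb{P}_n + \varepsilon\,\mathbb{P}_{n+1}$ from equation~(\ref{4-1}). Using the dual reciprocal $(a + \varepsilon b)^{-1} = a^{-1} - \varepsilon\,b\,a^{-2}$, valid whenever $a\neq 0$, one obtains
\[
\frac{\mathbb{D}^P_{n+1}}{\mathbb{D}^P_n} \;=\; \frac{\mathbb{P}_{n+1} + \varepsilon\,\mathbb{P}_{n+2}}{\mathbb{P}_n + \varepsilon\,\mathbb{P}_{n+1}} \;=\; \frac{\mathbb{P}_{n+1}}{\mathbb{P}_n} \;+\; \varepsilon\,\frac{\mathbb{P}_n\,\mathbb{P}_{n+2} - \mathbb{P}_{n+1}^{\,2}}{\mathbb{P}_n^{\,2}}.
\]
Next I would apply the Cassini-type identity $\mathbb{P}_{n-1}\mathbb{P}_{n+1} - \mathbb{P}_n^{\,2} = (-1)^n\,e_p$ listed among the properties of Section~2 (shifting $n$ by one) to rewrite $\mathbb{P}_n\,\mathbb{P}_{n+2} - \mathbb{P}_{n+1}^{\,2} = (-1)^{n+1} e_p$. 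Since $e_p$ is a fixed constant and $\mathbb{P}_n\to\infty$, the $\varepsilon$-coefficient tends to zero, so the dual limit reduces to its real part and, by Theorem~1,
\[
\lim_{n\to\infty}\frac{\mathbb{D}^P_{n+1}}{\mathbb{D}^P_n} \;=\; \lim_{n\to\infty}\frac{\mathbb{P}_{n+1}}{\mathbb{P}_n} \;=\; \frac{p\alpha + q}{q\alpha + (p - 2q)}.
\]

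To put this in the precise form stated in the theorem, I would multiply numerator and denominator of the last fraction by $(q\alpha + p - 2q)$. The new denominator becomes $(q\alpha + p - 2q)^2 = q^2\alpha^2 + (2pq - 4q^2)\alpha + (p^2 - 4pq + 4q^2)$, while the new numerator expands as $(p\alpha + q)(q\alpha + p - 2q) = pq\,\alpha^2 + (p^2 - 2pq + q^2)\alpha + (pq - 2q^2)$, matching the claim exactly.

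The only delicate step is the vanishing of the $\varepsilon$-coefficient in the dual quotient, which relies on the Cassini-type identity for $\mathbb{P}_n$ to bound the numerator uniformly while the denominator $\mathbb{P}_n^{\,2}$ blows up; once this is secured, the remainder is routine algebraic bookkeeping and I anticipate no further obstacle.
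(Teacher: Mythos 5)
Your proof is correct, but it takes a genuinely different route from the paper's. The paper never forms the dual quotient abstractly: it substitutes $\mathbb{D}^P_n=[p+\varepsilon(2p+q)]P_n+(q+\varepsilon p)P_{n-1}$, expands the ratio into a real part and an $\varepsilon$-part as rational functions of $P_n,P_{n-1}$, lets $P_{n+1}/P_n\to\alpha$, and observes that the $\varepsilon$-part carries the factor $\alpha^2-2\alpha-1=0$, so the dual part dies because $\alpha$ satisfies the characteristic equation; its real part comes out as $\frac{(2p^2+pq)\,\alpha^2+(p^2+2pq+q^2)\,\alpha+pq}{p^2\alpha^2+2pq\,\alpha+q^2}$, which coincides with the theorem's displayed formula only after a further use of $\alpha^2=2\alpha+1$. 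You instead stay at the level of $\mathbb{D}^P_n=\mathbb{P}_n+\varepsilon\,\mathbb{P}_{n+1}$, use the dual reciprocal to isolate the $\varepsilon$-coefficient $(\mathbb{P}_n\mathbb{P}_{n+2}-\mathbb{P}_{n+1}^2)/\mathbb{P}_n^2$, kill it with the Cassini-type identity $\mathbb{P}_{n-1}\mathbb{P}_{n+1}-\mathbb{P}_n^2=(-1)^n e_p$ (bounded numerator over $\mathbb{P}_n^2\to\infty$), and reduce the real part to Theorem 1; your conjugation step, multiplying $\frac{p\alpha+q}{q\alpha+p-2q}$ above and below by $q\alpha+p-2q$, reproduces the stated fraction exactly, a reconciliation the paper never carries out explicitly. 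Your route buys brevity, a structural reason for the vanishing of the dual part, and an exact match with the stated form; the paper's route buys a self-contained recomputation that does not presuppose Theorem 1's formula or the Cassini identity for $\mathbb{P}_n$, at the cost of heavier algebra. Both arguments tacitly assume $(p,q)\neq(0,0)$ so that $|\mathbb{P}_n|\to\infty$ and all quotients are eventually defined; this is not a defect relative to the paper, which is equally silent on it.
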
 \, \,
\begin{proof}
	
	For the Pell number $P_{n}$, we had
	$$\mathop {\lim }\limits_{n \to \infty } \frac{{P_{n + 1}}} {{P_n}} = \alpha, $$
	where $\alpha  = {1 + \sqrt 2 }.$ \\ 
	
	Then for the generalized dual Pell number $\mathbb{D}^P_n$,  we obtain

\begin{equation*}
\begin{aligned} 
\begin{array}{lll}
\mathop {\lim }\limits_{n \to \infty } \frac{{\mathbb{D}^P_{n + 1}}} {{\mathbb{D}^P_n}}& = \mathop {\lim }\limits_{n \to \infty } \frac{[p + \varepsilon\,(2p+q)]{P_{n + 1}} + (q + \varepsilon\,p){P_{n}}}  {{[p + \varepsilon \,(2p+q)]{P}_{n}} + (q + \varepsilon\,p)\,{P_{n-1}}} \\
\\

&  = \mathop {\lim }\limits_{n \to \infty } \frac{{[\,(2\,p\,+ q)\,P_{n}\,+p\,P_{n - 1}] + \varepsilon\,[\,(5p\,+2q)\,P_{n} + (2p\,+q)\,P_{n - 1}] }} {(p\,P_{n} + q\,P_{n-1})\,+\varepsilon\,[\,(2p+q)\,P_{n} + p\,P_{n - 1}]}\, \\ \\

&  = \mathop {\lim }\limits_{n \to \infty } \frac{{(2p^2+p\,q)\,P_{n}^2 + (p^2 + 2p\,q + q^2)\,P_{n}P_{n-1} + p\,q\,P_{n-1}^2 \,}} {p^2\,P_{n}^2 + 2\,p\,q\,P_{n}P_{n-1} +\,q^2\,P_{n-1}^2 }
\\
\\

&\;\;\;\;\;+\mathop{\lim}\limits_{n \to \infty } \varepsilon\,\,\frac{{{{ (p^2} - 2\,p\,q - q^2)\,(P_{n}^2}}-2P_{n}P_{n-1} -\,P_{n-1}^2 )} {{p^2\,P_{n}^2 + (2p\,q\,)\,P_{n}P_{n-1} +\,q^2\,P_{n-1}^2}}\\
\\
\end{array}
\end{aligned}
\end{equation*}
\begin{equation} \label{4-14}
\begin{aligned} 
\begin{array}{lll}

& = \frac{(2p^2+p\,q)\,\alpha^2 + (p^2 + 2p\,q + q^2)\,\alpha + p\,q} {{p^2\,\alpha^2 + (2p\,q\,)\,\alpha +\,q^2\,}}
\\
\\
&\;\;\;\;\;+ \varepsilon \,\frac{\,\,{{{ (p^2} - 2\,p\,q - q^2)\,(\alpha^2}}-2\alpha -1 )} {{p^2\,\alpha^2 + (2p\,q\,)\,\alpha +\,q^2\,}}
\\
\\
& = \frac{(2p^2+p\,q)\,\alpha^2 + (p^2 + 2p\,q + q^2)\,\alpha + p\,q} {{p^2\,\alpha^2 + (2p\,q\,)\,\alpha +\,q^2\,}}+0.	
\end{array}
\end{aligned}
\end{equation}
	\end{proof}
\noindent\textbf{Special Case-2:} If we take $p = 1$, $q = 0$ in the equation (\ref{4-14}), we obtain
	
	$$\mathop {\lim }\limits_{n \to \infty } \frac{{\mathbb{D}^P_{n + 1}}} {{\mathbb{D}^P_n}}=\mathop {\lim }\limits_{n \to \infty } \frac{{D^P_{n + 1}}} {{D^P_n}} = \alpha  + 0 = \alpha. $$
\\
\begin{theorem}
	The Binet's formula \footnote[2]{Binet's formulas are the explicit formulas to obtain the n-th Pell and Pell-Lucas numbers. It is well known that Binet's formulas for the Pell and Pell-Lucas numbers are \\
		$$
		{P_n} = \frac{{\alpha ^n} - {\beta ^n}} {\alpha  - \beta }
		$$
		and  
		$$
		{Q_n} = {\alpha ^n} + {\beta ^n}
		$$
		respectively, where $\alpha  + \beta  = 2\,,\;\alpha  - \beta  = 2\sqrt {2\,} \,,\;\alpha \beta  =  - 1$    and $\alpha  = {{1 + \sqrt 2 } }$ , \,$\beta  = {{1 - \sqrt 2 } }$ , \,\cite{bicknell, horadam5}.} 
	for  the generalized dual Pell sequence is as follows;		
\begin{equation}
	{\mathbb{D}^P_n} = \frac {1} {\alpha  - \beta }\,(\,\bar \alpha \,\,{\alpha ^n} - \bar \beta \,\,{\beta ^n}).\;
\end{equation}
\end{theorem}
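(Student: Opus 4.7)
The plan is to piggyback on the Binet formula for $\mathbb{P}_n$ that was already established earlier in the paper. Since $\mathbb{D}^P_n = \mathbb{P}_n + \varepsilon\,\mathbb{P}_{n+1}$ by definition, and since each $\mathbb{P}_n$ is given by $\mathbb{P}_n = (\bar{\alpha}\,\alpha^n - \bar{\beta}\,\beta^n)/(\alpha-\beta)$ with $\bar{\alpha} = p + q(\alpha-2)$ and $\bar{\beta} = p + q(\beta-2)$, substitution immediately yields
\[
\mathbb{D}^P_n \;=\; \frac{\bar{\alpha}\,\alpha^n - \bar{\beta}\,\beta^n}{\alpha-\beta} + \varepsilon\,\frac{\bar{\alpha}\,\alpha^{n+1} - \bar{\beta}\,\beta^{n+1}}{\alpha-\beta} \;=\; \frac{\bar{\alpha}(1+\varepsilon\alpha)\,\alpha^n - \bar{\beta}(1+\varepsilon\beta)\,\beta^n}{\alpha-\beta}.
\]
Thus, interpreting the symbols $\bar{\alpha}$ and $\bar{\beta}$ in the statement as their dual analogues $\bar{\alpha}^{\mathbb{D}} := \bar{\alpha}(1+\varepsilon\alpha)$ and $\bar{\beta}^{\mathbb{D}} := \bar{\beta}(1+\varepsilon\beta)$, the formula is proved.

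Alternatively, one can give a self-contained derivation via the characteristic equation, which I would present as the main route since it parallels the preceding theorem's proof. First I would note that the recurrence (\ref{2-1}) for $\mathbb{P}_n$ forces $\mathbb{D}^P_{n+2} = 2\,\mathbb{D}^P_{n+1} + \mathbb{D}^P_n$, so $\mathbb{D}^P_n$ shares the characteristic equation $t^2 - 2t - 1 = 0$ with roots $\alpha, \beta$. Hence the general solution has the form $\mathbb{D}^P_n = A\,\alpha^n + B\,\beta^n$ for dual-number constants $A,B$. I would then impose the initial conditions $\mathbb{D}^P_0 = q + \varepsilon p$ and $\mathbb{D}^P_1 = p + \varepsilon(2p+q)$ and solve the $2\times 2$ linear system
\[
A + B = q + \varepsilon p, \qquad A\alpha + B\beta = p + \varepsilon(2p+q).
\]
This gives $A = [\,p - \beta q + \varepsilon(2p + q - \beta p)\,]/(\alpha-\beta)$ and the analogous expression for $B$.

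The only technical step is to recognize that $A$ factors cleanly as $\bar{\alpha}(1+\varepsilon\alpha)/(\alpha-\beta)$. For this I would use $\alpha+\beta = 2$ to rewrite $-\beta = \alpha - 2$, yielding the ordinary part $p + q(\alpha-2) = \bar{\alpha}$, and separately use $\alpha^2 = 2\alpha + 1$ to check the identity $\alpha\bar{\alpha} = p\alpha + q$, which converts the $\varepsilon$-part $p(2-\beta) + q = p\alpha + q$ into $\alpha\bar{\alpha}$. A symmetric calculation handles $B = -\bar{\beta}(1+\varepsilon\beta)/(\alpha-\beta)$. Substituting back gives exactly the claimed Binet formula.

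The proof is essentially routine; the only mild obstacle is the algebraic massaging needed to recognize the coefficients $A,B$ as $\bar{\alpha}(1+\varepsilon\alpha)$ and $\bar{\beta}(1+\varepsilon\beta)$. Since the first approach (direct substitution into the already-proved Theorem~2 Binet formula for $\mathbb{P}_n$) avoids even that, I would actually present it as the main argument and relegate the characteristic-equation derivation to a remark.
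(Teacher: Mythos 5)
Your argument is correct, but it is routed slightly differently from the paper's. The paper proves this exactly as it proved Theorem 2: it writes $\mathbb{D}^P_n=(p-2q+\varepsilon q)\,P_n+(q+\varepsilon p)\,P_{n+1}$ (equation (\ref{4-4})), substitutes the ordinary Pell Binet formula $P_n=(\alpha^n-\beta^n)/(\alpha-\beta)$, and simply \emph{defines} the dual-number coefficients $\bar\alpha=(p-2q+\varepsilon q)+\alpha(q+\varepsilon p)$ and $\bar\beta=(p-2q+\varepsilon q)+\beta(q+\varepsilon p)$; no further algebra is needed because the theorem statement leaves $\bar\alpha,\bar\beta$ unspecified. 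Your main route instead decomposes $\mathbb{D}^P_n=\mathbb{P}_n+\varepsilon\,\mathbb{P}_{n+1}$ and feeds in Theorem 2's Binet formula for $\mathbb{P}_n$, arriving at the factored coefficients $\bar\alpha(1+\varepsilon\alpha)$, $\bar\beta(1+\varepsilon\beta)$ with $\bar\alpha=p+q(\alpha-2)$; this is equivalent to the paper's choice precisely because $\alpha\bar\alpha=p\alpha+q$ (via $\alpha^2=2\alpha+1$), the identity you already isolate, so your identification step is the only extra work and it is sound — indeed your version has the small advantage of exhibiting the dual coefficients as deformations of the Theorem 2 coefficients. Your alternative characteristic-equation derivation is genuinely different from anything in the paper and also valid: although the dual numbers are not a field, the $2\times 2$ system for $A,B$ has real coefficient matrix with determinant $\beta-\alpha=-2\sqrt2\neq0$, so $A,B$ are uniquely determined and the uniqueness of the solution of the recurrence with given $\mathbb{D}^P_0,\mathbb{D}^P_1$ closes the argument; it buys self-containedness at the cost of the coefficient massaging you describe, whereas the paper's (and your primary) substitution argument is a one-line computation.
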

\begin{proof}
	If we use definition of the generalized dual Pell sequence and  substitute first equation in footnote, we get 	
	\begin{equation*}
	\begin{aligned}
	\begin{array}{lll}
	{\mathbb{D}^P_n} &= (p - 2\,q + \varepsilon \,q)\,{P_n} + (q + \varepsilon \,p)\,{P_{n + 1}}\\
	& = (p - 2\,q + \varepsilon \,q)\,(\frac{{\alpha ^n} - {\beta ^n}} {\alpha  - \beta }) + (q + \varepsilon \,p)\,(\frac{{\alpha ^{n + 1}} - {\beta ^{n + 1}}} {\alpha  - \beta }) \\ 
	& = \frac{\overline \alpha  \;{\alpha ^n}\; - \,\overline {\beta \,} \,{\beta ^n}} {\alpha  - \beta },\,
	
	\end{array}
	\end{aligned}
	\end{equation*}
	where  $\overline \alpha   = (p - 2\,q + \varepsilon \,q) + \alpha \,(q + \varepsilon \,p)$ and  $\overline \beta   = (p - 2\,q + \varepsilon \,q) + \beta \,(q + \varepsilon \,p) .$  
\end{proof}

\subsection{The Generalized Dual Pell Vectors}
 A generalized dual Pell vector is defined by

$$
\overrightarrow {{\mathbb{D}^P_n}} = (\mathbb{D}^P_n\,,\,\mathbb{D}^P_{n + 1},\,\mathbb{D}^P_{n + 2}).
$$
From the equations (\ref{2-1}), (\ref{2-2}) and (\ref{2-3}), it can be expressed as
\begin{equation}\label{4-17}
\begin{aligned}
\begin{array}{lll}
\overrightarrow {{\mathbb{D}^P_n}}&=\overrightarrow {{\mathbb{P}_{n}}}+\varepsilon\,\overrightarrow {{\mathbb{P}_{n+1}}}\\
&=(p - 2\,q + \varepsilon\,q)\overrightarrow {{\rm{P}}}_{n}+ (q + \varepsilon \,p)\overrightarrow {{\rm{P}}}_{\rm{n+1}},
\end{array}
\end{aligned}
\end{equation}
where $\overrightarrow {{\mathbb{P}_{n}}}= (\,{\mathbb{P}_{n}}\,,\,{\mathbb{P}_{n+1}},\,{\mathbb{P}_{n+2}})$ and $\overrightarrow {{\rm{P}}}_{\rm{n}}= (\,{P_n}\,,\,{P_{n + 1}},\,{P_{n + 2}})$ are the generalized Pell vector and the Pell vector, respectively. \\
\\
The product of\, $\overrightarrow {{\mathbb{D}^P_n}}$ and $ \lambda \in \mathbb{R}$ is given by 
$$ 	\lambda \,\overrightarrow {{\mathbb{D}^P_n}} =\lambda \overrightarrow {{\mathbb{P}_{n}}}+\varepsilon \,\lambda \,\overrightarrow {{\mathbb{P}_{n+1}}} $$
and, \,
$\overrightarrow {{\mathbb{D}^P_n}}$ and $\overrightarrow {{\mathbb{D}^P_m}}$ are equal if and only if 
$$
\begin{aligned}
\begin{array}{lcl}
{\mathbb{P}_{n}}&=&{\mathbb{P}_{m}}, \\
{\mathbb{P}_{n + 1}} &= &{\mathbb{P}_{m + 1}}, \\
{\mathbb{P}_{n + 2}}&=&{\mathbb{P}_{m+2}}\,.
\end{array}
\end{aligned}
$$
\begin{theorem}
Let $\overrightarrow {{\mathbb{D}^P_n}}$ and $\overrightarrow {{\mathbb{D}^P_m}}$ be two generalized dual Pell vectors. The dot product of \, $\overrightarrow {{\mathbb{D}^P_n}}$ and $\overrightarrow {{\mathbb{D}^P_m}}$ is given by
\begin{equation} \label{4-18}
\begin{aligned}
\begin{array} {lll}
\left\langle \overrightarrow {{\mathbb{D}^P_n}},\,\overrightarrow {{\mathbb{D}^P_m}} \right\rangle = {p^2}[( {{P_{n+m+3}} + {P_{n}}\,{P_m}} )+ \varepsilon\,(2{P_{n+m+4}} + {P_{n+m}} + 2\,{P_{n}}{P_{m}})\, ]    \\
\quad\quad \quad \quad \quad + p\,q [ ({{P_{n+m+2}} + 2\,{P_{n+1}\,{P_{m+1}} + 2\,{P_{n+m}}}} ) \\
\quad\quad \quad \quad \quad +\varepsilon\,(2{P_{n+m+3}} + 4\,{P_{n+m+1}} + 2\,{P_{n+m+2}} + 4\,{P_{n+1}}{P_{m+1}})\, ] \\
\quad\quad \quad \quad \quad  + {q ^2} [ ({{P_{n+m+1}} + {P_{n-1}}\,{P_{m-1}}}) \\
\quad\quad \quad \quad \quad +\varepsilon\,(2{{P_{n+m+2}} + {P_{n+m-2}}\, + 2\,{P_{n-1}}{P_{m-1}}})\, ].
\end{array}
\end{aligned}
\end{equation}
\end{theorem}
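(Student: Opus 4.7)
The plan is to exploit the decomposition $\overrightarrow{\mathbb{D}^P_n} = \overrightarrow{\mathbb{P}_n} + \varepsilon\, \overrightarrow{\mathbb{P}_{n+1}}$ from equation (\ref{4-17}) together with the nilpotency $\varepsilon^2 = 0$. By bilinearity of the Euclidean dot product, one immediately obtains
\begin{equation*}
\langle \overrightarrow{\mathbb{D}^P_n}, \overrightarrow{\mathbb{D}^P_m} \rangle = \langle \overrightarrow{\mathbb{P}_n}, \overrightarrow{\mathbb{P}_m} \rangle + \varepsilon\, \bigl( \langle \overrightarrow{\mathbb{P}_n}, \overrightarrow{\mathbb{P}_{m+1}} \rangle + \langle \overrightarrow{\mathbb{P}_{n+1}}, \overrightarrow{\mathbb{P}_m} \rangle \bigr),
\end{equation*}
which reduces the entire computation to three applications of Theorem 3.

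I would then apply equation (\ref{2-30}) directly to the real part, producing the $\varepsilon$-free piece of (\ref{4-18}) at once. For the $\varepsilon$-coefficient, two further applications of Theorem 3, first with the index pair $(n, m+1)$ and then with $(n+1, m)$, give two expressions involving the shifted terms $P_{n+m+4}$, $P_{n+m+3}$, $P_{n+m+2}$, together with the cross products $P_n P_{m+1}$, $P_{n+1} P_m$, $P_{n+1} P_{m+2}$, $P_{n+2} P_{m+1}$, $P_{n-1} P_m$, $P_n P_{m-1}$. Summing these two outputs and grouping by $p^2$, $pq$, $q^2$ produces the $\varepsilon$ block of (\ref{4-18}), provided the cross products are reassembled using the Pell convolution identity $P_i P_{j+1} + P_{i-1} P_j = P_{i+j}$ from (\ref{1-1}) and, where necessary, the basic recurrence $P_{k+1} = 2 P_k + P_{k-1}$ to rewrite shifted indices.

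The main obstacle, and essentially the only one, is the bookkeeping: ensuring that the dozen or so shifted Pell products collapse into the exact pattern $2 P_{n+m+4} + P_{n+m} + 2 P_n P_m$ for the $p^2\varepsilon$-block, and its $pq$- and $q^2$-analogues as written in (\ref{4-18}). A convenient sanity check is symmetry: the stated $\varepsilon$-block of (\ref{4-18}) is manifestly symmetric under $n \leftrightarrow m$, and the sum $\langle \overrightarrow{\mathbb{P}_n}, \overrightarrow{\mathbb{P}_{m+1}} \rangle + \langle \overrightarrow{\mathbb{P}_{n+1}}, \overrightarrow{\mathbb{P}_m} \rangle$ is visibly symmetric as well, which cuts the verification work roughly in half. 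Since everything in sight is bilinear in $p$ and $q$ with coefficients that are polynomial in Pell numbers, there is no conceptual difficulty — once the $\varepsilon^2 = 0$ reduction is made, the proof is a routine consolidation of terms produced by Theorem 3.
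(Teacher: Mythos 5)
Your proposal is correct and follows essentially the same route as the paper: the paper likewise uses $\overrightarrow{\mathbb{D}^P_n}=\overrightarrow{\mathbb{P}_n}+\varepsilon\,\overrightarrow{\mathbb{P}_{n+1}}$ with $\varepsilon^2=0$ to reduce the dot product to $\left\langle \overrightarrow{\mathbb{P}_n},\overrightarrow{\mathbb{P}_m}\right\rangle+\varepsilon\left[\left\langle \overrightarrow{\mathbb{P}_n},\overrightarrow{\mathbb{P}_{m+1}}\right\rangle+\left\langle \overrightarrow{\mathbb{P}_{n+1}},\overrightarrow{\mathbb{P}_m}\right\rangle\right]$ and then applies the generalized Pell dot-product formula (Theorem 3, eq. (\ref{2-30})) with shifted indices. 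Your final consolidation via $P_iP_{j+1}+P_{i-1}P_j=P_{i+j}$ and the recurrence is exactly the bookkeeping needed to reach the stated $\varepsilon$-blocks, so the argument is complete.
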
	

\begin{proof}
The dot product of $\;$
$\overrightarrow {{\mathbb{D}^P_n}} = ({\mathbb{D}^P_n}\,,\,{\mathbb{D}^P_{n+1}},\,{\mathbb{D}^P_{n + 2}})$ \, and \\
$\overrightarrow {{\mathbb{D}^P_m}} = ({\mathbb{D}^P_m}\,,\,{\mathbb{D}^P_{m+1}},\,{\mathbb{D}^P_{m + 2}})$ is defined by 
$$\begin{aligned}
\begin{array} {lll}
\left\langle \overrightarrow {{\mathbb{D}^P_n}},\,\overrightarrow {{\mathbb{D}^P_m}} \right\rangle & = {\mathbb{D}^P_n}\,{\mathbb{D}^P_m}+{\mathbb{D}^P_{n+1}}\,{\mathbb{D}^P_{m+1}}+{\mathbb{D}^P_{n+2}}\,{\mathbb{D}^P_{m+2}}\\
&=\left\langle \overrightarrow {{\mathbb{P}_{n}}}
,\overrightarrow {{\mathbb{P}_{m}}} \right\rangle+\varepsilon\,[\, \left\langle \overrightarrow {{\mathbb{P}_{n}}}\;,\overrightarrow {{\mathbb{P}_{m+1}}} \right\rangle +\left\langle \overrightarrow {{\mathbb{P}_{n+1}}}
\;,\overrightarrow {{\mathbb{P}_{m}}} \right\rangle \, ],
\end{array}
\end{aligned}$$	
where $\overrightarrow {{\mathbb{P}_{n}}}= (\,{\mathbb{P}_{n}}\,,\,{\mathbb{P}_{n+1}},\,{\mathbb{P}_{n+2}})$ is the generalized Pell vector. Also, from the equations (\ref{2-1}), (\ref{2-2}) and (\ref{2-3}), we obtain
\begin{equation} \label{4-19}
\begin{aligned}
\begin{array} {lll}
\left\langle \overrightarrow {{\mathbb{P}_{n}}}
,\overrightarrow {{\mathbb{P}_{m}}} \right\rangle = {p^2}( {{P_{n+m+3}} + {P_{n}}\,{P_m}} ) \\
\quad\quad \quad \quad \quad \quad + p\,q\,({{P_{n+m+2}} + 2\,{P_{n+1}\,{P_{m+1}} + 2\,{P_{n+m}}}} ) \\
\quad\quad \quad \quad \quad \quad + {q ^2}\,({{P_{n+m+1}} + {P_{n-1}}\,{P_{m-1}}})\,,
\end{array}
\end{aligned}
\end{equation}
\begin{equation} \label{4-20}
\begin{aligned}
\begin{array} {lll}
\left\langle \overrightarrow {{\mathbb{P}_{n}}}\;,\overrightarrow {{\mathbb{P}_{m+1}}} \right\rangle = {p^2}( {P_{n+m+4}} + {P_{n}}\,{P_{m+1}} ) \\
\quad\quad \quad \quad \quad \quad \quad + p\,q\,[2\,{P_{n+m+3}} + 2\,{P_{n+1}}\,{P_{m+2}} + 2\,{P_{n+m+1}} ] \\
\quad\quad \quad \quad \quad \quad \quad  + {q^2} [ {{P_{n+m +2}} + {P_{n - 1}}\,{P_{m}}} ]\,
\end{array}
\end{aligned}
\end{equation}
and
\begin{equation} \label{4-21}
\begin{aligned}
\begin{array} {lll}
\left\langle \overrightarrow {{\mathbb{P}_{n+1}}}\;,\overrightarrow {{\mathbb{P}_{m}}} \right\rangle = {p^2}( {P_{n+m+4}} + {P_{n+1}}\,{P_{m}} ) \\
\quad\quad \quad \quad \quad \quad \quad + p\,q\,({{P_{n+m+3}} + 2\,{P_{n+2}\,{P_{m+1}} + 2\,{P_{n+m+1}}}} ) \\
\quad\quad \quad \quad \quad \quad \quad  + {q ^2}\,({{P_{n+ m+2}} + {P_{n}}\,{P_{m-1}}})\,.
\end{array}
\end{aligned}
\end{equation}
Then from the equations (\ref{4-19}), (\ref{4-20}) and (\ref{4-21}), we have the equation (\ref{4-18}).
\end{proof} 

\noindent\textbf{Special Case-3:} For the dot product of generalized dual Pell vectors $\overrightarrow {{\mathbb{D}^P_n}}$ and $\overrightarrow {{\mathbb{D}^P_{n+1}}}$, we get
\begin{equation} \label{4-22}
\begin{aligned}
\begin{array}{lll}
\left\langle {\overrightarrow {{\mathbb{D}^P_n}}},\, {\overrightarrow {{\mathbb{D}^P_{n+1}}}} \right\rangle&= {\mathbb{D}^P_n}\,{\mathbb{D}^P_{n+1}}+{\mathbb{D}^P_{n+1}}\,{\mathbb{D}^P_{n+2}}+{\mathbb{D}^P_{n+2}}\,{\mathbb{D}^P_{n+3}}\\
&=\left\langle \overrightarrow {{\mathbb{P}_{n}}}
,\overrightarrow {{\mathbb{P}_{n+1}}} \right\rangle+\varepsilon[\, \left\langle \overrightarrow {{\mathbb{P}_{n}}}
,\overrightarrow {{\mathbb{P}_{n+2}}} \right\rangle +\left\langle \overrightarrow {{\mathbb{P}_{n+1}}}
,\overrightarrow {{\mathbb{P}_{n+1}}} \right\rangle  ]\\
& = {p^2}\left\lbrace ( {{P_{2n + 4}} + {P_n}\,{P_{n + 1}}})+\varepsilon\,( 2{P_{2n + 5}}+ {P_{2n + 1}+ 2\,{P_n}\,{P_{n+1}}})\right\rbrace \\
&+p\,q\{ ( 2\,{P_{2n + 3}}+\,{P_{2n - 1}+2\,{P_{n}}\,{P_{n-1}}})\\
&+\varepsilon\,( 2\,{P_{2n+4}}+4\,{P_{2n+2}} + 2\,{P_{2n+3}} + 4\,{P_{n+1}}\,{P_{n+2}} )\\
&+{q^2}\{  ( {{P_{2n + 2}} + {P_{n-1}}\,{P_{n}}})\\
&+\varepsilon ( {2{P_{2n + 3}}+ {P_{2n-1}}+2\,{P_{n}}\,{P_{n-1}}}) \}
\end{array}	
\end{aligned}
\end{equation}
and \\
\begin{equation}\label{4-23}
\begin{aligned}
\begin{array}{lll}
\left\langle {\overrightarrow {{\mathbb{D}^P_n}}},\, {\overrightarrow {{\mathbb{D}^P_{n}}}} \right\rangle & = ({\mathbb{D}^P_n})^2 +({\mathbb{D}^P_{n+1}})^2+({\mathbb{D}^P_{n+2}})^2\\
&=\left\langle \overrightarrow {{\mathbb{P}_{n}}}
, \, \overrightarrow {{\mathbb{P}_{n}}} \right\rangle+2\,\varepsilon \, \left\langle \overrightarrow {{\mathbb{P}_{n}}}
,\, \overrightarrow {{\mathbb{P}_{n+1}}} \right\rangle\\
&  =  {p^2}({{P_{2n + 3}} + P_n^2})+p\,q({P_{2n+2}}+2{P_{n+1}^2}+ 2\,{P_{2n}}) \\
&\quad \quad +{q^2}({P_{2n + 1}} + {P_{n-1}^2}) ]\\
&+2\,\varepsilon \{{p^2}({P_{2n + 4}}+{P_{n}}{P_{n+1}})+p\,q({P_{2n+3}}+2\,{P_{n+1}}\,{P_{n+2}})\\
&\quad \quad +{q^2}({P_{2n + 2}}+{P_{n-1}}{P_{n}})\}.
\end{array}	
\end{aligned}
\end{equation}
\\
We obtain the norm of the generalized dual Pell vector \footnote[3]{Norm of the dual number is:
$$
\left\| {{\overrightarrow {{{\rm\mathbb{A}}}}}} \right\|=\sqrt{a+\varepsilon\,a^*}=\sqrt{a}+\varepsilon\,a^*\frac{1}{2\sqrt{a}}, \,\\ \\
\\
A=a+\varepsilon\,a^* \quad [1].
$$.}
like this; \\ \,
\\
\begin{equation}\label{4-24}	
\begin{aligned}
\begin{array}{lll}
\left\| {{\overrightarrow {{\mathbb{D}^P_n}}}} \right\|& = \sqrt[]{\left[\left\langle \overrightarrow {{\mathbb{D}^P_n}},\, \overrightarrow {{\mathbb{D}^P_n}} \right\rangle\right]}= \sqrt[]{\left[({\mathbb{D}^P_n})^2 +({\mathbb{D}^P_{n+1}})^2+({\mathbb{D}^P_{n+2}})^2\right]} \\

&={\sqrt[]{\left[ {p^2}({P_{2n + 3}} + P_n^2)++p\,q({P_{2n+2}}+2{P_{n+1}^2}+ 2\,{P_{2n}})+{q^2}({P_{2n + 1}} + {P_{n-1}^2}) \right]}} \\
&+\sqrt[]{  2\varepsilon\{{{p^2}({P_{2n+4}}+{P_{n}}{P_{n+1}})+pq({P_{2n+3}}+2{P_{n+1}}{P_{n+2}})}} \\
&+\sqrt[]{{q^2}({P_{2n+2}}+{P_{n-1}}{P_{n}})\}}.
\end{array}
\end{aligned}
\end{equation}
\\	
\noindent\textbf{Special Case-4:} For $p=1,\, q=0$ in the equations (\ref{4-18}), (\ref{4-22}) and (\ref{4-24}), we have \\
\begin{equation*}
	\begin{aligned}
		\begin{array}{lll}
			\left\langle {\overrightarrow {{\mathbb{D}^P_n}}},\, {\overrightarrow {{\mathbb{D}^P_{m}}}} \right\rangle & = \left[P_{n+m+3}+P_{n}P_{m}\right]+\varepsilon \left[2P_{n+m+4}+P_{n+m}+2P_{n}P_{m}\right]
		\end{array}	
	\end{aligned}
\end{equation*}
\begin{equation*}
\begin{aligned}
\begin{array}{lll}
\left\langle {\overrightarrow {{\mathbb{D}^P_n}}},\, {\overrightarrow {{\mathbb{D}^P_{n+1}}}} \right\rangle & = \left[P_{2n+4}+P_{n}P_{n+1}\right]+\varepsilon \left[2P_{2n+5}+P_{2n+1}+2P_{n}P_{n+1}\right]
\end{array}	
\end{aligned}
\end{equation*}

and \\

$\left\| {{\overrightarrow {{D}^P_n}}} \right\| = {\sqrt[]{ ({{P_{2n + 3}} + P_n^2}\,)+2\,\varepsilon \,({P_{2n + 4}}+{P_{n}}{P_{n+1}})}}$ \\

$\quad \quad \quad   =({{P_{2n + 3}} + P_n^2}\,)+\varepsilon \,{\, \frac{({P_{2n + 4}}+{P_{n}}{P_{n+1}})}{\sqrt[]{({P_{2n + 3}} + {P_n}^2\,)} }} .$ \\

\begin{theorem}
Let ${\overrightarrow {{\mathbb{D}^P_n}}}$  and ${\overrightarrow {{\mathbb{D}^P_m}}}$  be two generalized dual Pell vectors. The cross product of ${\overrightarrow {{\mathbb{D}^P_n}}}$  and ${\overrightarrow {{\mathbb{D}^P_m}}}$  is given by
\\
\begin{equation}\label{4-25}
{\overrightarrow {{\mathbb{D}^P_n}}} \times {\overrightarrow {{\mathbb{D}^P_m}}} = {( - 1)^{m+1}}{P_{n - m}}\,(1 + 2\,\varepsilon )\,e_{P}\,(( i + 2\,j - k).
\end{equation}
\end{theorem}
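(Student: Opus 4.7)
The plan is to leverage the dual decomposition $\overrightarrow{\mathbb{D}^P_n} = \overrightarrow{\mathbb{P}_n} + \varepsilon\,\overrightarrow{\mathbb{P}_{n+1}}$ (equation (\ref{4-17})) together with the nilpotency $\varepsilon^2 = 0$, and then reduce everything to the cross product formula for generalized Pell vectors already proved in equation (\ref{2-38}). Concretely, I would first expand bilinearly:
\begin{equation*}
\overrightarrow{\mathbb{D}^P_n}\times\overrightarrow{\mathbb{D}^P_m}
 = \overrightarrow{\mathbb{P}_n}\times\overrightarrow{\mathbb{P}_m}
   + \varepsilon\bigl(\overrightarrow{\mathbb{P}_n}\times\overrightarrow{\mathbb{P}_{m+1}}
                    + \overrightarrow{\mathbb{P}_{n+1}}\times\overrightarrow{\mathbb{P}_m}\bigr),
\end{equation*}
using $\varepsilon^2 = 0$ to kill the $\overrightarrow{\mathbb{P}_{n+1}}\times\overrightarrow{\mathbb{P}_{m+1}}$ term.

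Next I would apply equation (\ref{2-38}) to each of the three surviving cross products, obtaining (with the common factor $(i+2j-k)\,e_P$):
\begin{equation*}
\begin{array}{l}
\overrightarrow{\mathbb{P}_n}\times\overrightarrow{\mathbb{P}_m} = (-1)^{m+1}P_{n-m}(i+2j-k)e_P,\\[2pt]
\overrightarrow{\mathbb{P}_n}\times\overrightarrow{\mathbb{P}_{m+1}} = (-1)^{m}P_{n-m-1}(i+2j-k)e_P,\\[2pt]
\overrightarrow{\mathbb{P}_{n+1}}\times\overrightarrow{\mathbb{P}_m} = (-1)^{m+1}P_{n-m+1}(i+2j-k)e_P.
\end{array}
\end{equation*}
The $\varepsilon$-coefficient then becomes $(-1)^m(i+2j-k)e_P\bigl[P_{n-m-1}-P_{n-m+1}\bigr]$.

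The key algebraic identity to collapse this is the Pell recurrence $P_{n-m+1}=2P_{n-m}+P_{n-m-1}$, which gives $P_{n-m-1}-P_{n-m+1}=-2P_{n-m}$. Substituting yields the $\varepsilon$-part equal to $2(-1)^{m+1}P_{n-m}(i+2j-k)e_P$, and adding back the constant part produces $(-1)^{m+1}P_{n-m}(1+2\varepsilon)e_P(i+2j-k)$, which is exactly (\ref{4-25}).

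This is essentially a bookkeeping proof: the structure parallels the complex Pell vector case (Theorem on (\ref{3-21})) almost verbatim, with $\varepsilon$ replacing $i$ and $\varepsilon^2=0$ replacing $i^2=-1$. The only step that deserves care is matching the subscripts $n-m\pm 1$ in the sign conventions of (\ref{2-38}) and then applying the Pell recurrence correctly; no genuine difficulty arises beyond this indexing.
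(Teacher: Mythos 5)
Your proposal is correct and follows essentially the same route as the paper: decompose $\overrightarrow{\mathbb{D}^P_n}=\overrightarrow{\mathbb{P}_n}+\varepsilon\,\overrightarrow{\mathbb{P}_{n+1}}$, use $\varepsilon^2=0$, and evaluate the three resulting cross products $\overrightarrow{\mathbb{P}_n}\times\overrightarrow{\mathbb{P}_m}$, $\overrightarrow{\mathbb{P}_n}\times\overrightarrow{\mathbb{P}_{m+1}}$, $\overrightarrow{\mathbb{P}_{n+1}}\times\overrightarrow{\mathbb{P}_m}$ via the generalized Pell cross-product formula (\ref{2-38}). You even make explicit the final collapsing step $P_{n-m-1}-P_{n-m+1}=-2P_{n-m}$ from the Pell recurrence, which the paper leaves implicit when it passes from (\ref{4-26})--(\ref{4-28}) to (\ref{4-25}).
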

\begin{proof}
The cross product of $\overrightarrow {{\mathbb{D}^P_n}}=\overrightarrow {{\mathbb{P}_{n}}}+\varepsilon \,\overrightarrow {{\mathbb{P}_{n}}}$ \,and \,  $\overrightarrow {{\mathbb{D}^P_m}}=\overrightarrow {{\mathbb{P}_{m}}}+\varepsilon \,\overrightarrow {{\mathbb{P}_{m+1}}}$ \, defined by 
\\
\begin{equation*}
{\overrightarrow {{\mathbb{D}^P_n}}} \times {\overrightarrow {{\mathbb{D}^P_m}}} =({\overrightarrow {{\mathbb{P}_{n}}}} \times {\overrightarrow {{\mathbb{P}_{m}}}})+\varepsilon \,({\overrightarrow {{\mathbb{P}_{n}}}} \times {\overrightarrow {{\mathbb{P}_{m+1}}}}+{\overrightarrow {{\mathbb{P}_{n+1}}}} \times {\overrightarrow {{\mathbb{P}_{m}}}}),
\end{equation*}\\
where ${\overrightarrow {{\mathbb{P}_{n}}}}$ is the generalized Pell vector and ${\overrightarrow {{\mathbb{P}_{n}}}} \times {\overrightarrow {{\mathbb{P}_{m}}}}$ \, is the cross product for the generalized Pell vectors  ${\overrightarrow {{\mathbb{P}_{n}}}}$ and ${\overrightarrow {{\mathbb{P}_{m}}}}$.\\

Now, we calculate the cross products ${\overrightarrow {{\mathbb{P}_{n}}}} \times {\overrightarrow {{\mathbb{P}_{m}}}}$,\, ${\overrightarrow {{\mathbb{P}_{n}}}} \times {\overrightarrow {{\mathbb{P}_{m+1}}}}$ \, and\\ ${\overrightarrow {{\mathbb{P}_{n+1}}}} \times {\overrightarrow {{\mathbb{P}_{m}}}}$:\\

Using the property $P_m P_{n+1}-P_{m+1}P_n=(-1)^n P_{m-n}$, we get 

\begin{equation}\label{4-26}
\begin{aligned}
\begin{array}{ll}
{\overrightarrow {{\mathbb{P}_{n}}}} \times {\overrightarrow {{\mathbb{P}_{m}}}}&=(-1)^{m+1} P_{n-m}(i+2j-k)({p^2} - 2\,pq - {q^2}) \\
\\
&=(-1)^{m+1} P_{n-m}\,(i+2j-k)\,e_{P},
\end{array}
\end{aligned}
\end{equation}
\begin{equation}\label{4-27}
\begin{aligned}
\begin{array}{ll}
{\overrightarrow {{\mathbb{P}_{n}}}} \times {\overrightarrow {{\mathbb{P}_{m+1}}}}&=(-1)^{m+2} P_{n-m-1}\,(i+2j-k)({p^2} - 2\,pq - {q^2}) \\
\\
&=(-1)^{m+2} P_{n-m-1}\,(i+2j-k)\,e_{P}
\end{array}
\end{aligned}
\end{equation}

and \\
\begin{equation}\label{4-28}
\begin{aligned}
\begin{array}{ll}
{\overrightarrow {{\mathbb{P}_{n+1}}}} \times {\overrightarrow {{\mathbb{P}_{m}}}}&=(-1)^{m+1} P_{n-m+1}\,(i+2j-k)({p^2} - 2\,pq - {q^2}) \\
\\
&=(-1)^{m+1} P_{n-m+1}\,(i+2j-k)\,e_{P}.
\end{array}
\end{aligned}
\end{equation} \\ 
Then from the equations (\ref{4-26}), (\ref{4-27}) and (\ref{4-28}), we obtain
the equation (\ref{4-25}).\\ \\ \,
\textbf{Special Case-5:} For $p=1,\,q=0$ in the equation (\ref{4-25}), we have 
$${{\overrightarrow {{D}^P_{n}}} \times {\overrightarrow {{D}^P_{m}}}}=(-1)^{m+1} P_{n-m}\,(1+2\,\varepsilon) (i+2j-k). $$

\end{proof}
\begin{theorem}
Let ${\overrightarrow {{\mathbb{D}^P_n}}}$, ${\overrightarrow {{\mathbb{D}^P_m}}}$  and ${\overrightarrow {{\mathbb{D}^P_k}}}$  be the generalized dual Pell vectors. The mixed product of these vectors is
\begin{equation}\label{4-29}
\left\langle {\overrightarrow {{\mathbb{D}^P_n}}} \times {\overrightarrow {{\mathbb{D}^P_m}}}\;,\;{\overrightarrow {{\mathbb{D}^P_k}}} \right\rangle  = 0 .
\end{equation}
\end{theorem}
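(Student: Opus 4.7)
The plan is to follow exactly the same template used in the earlier proofs for the generalized Pell and generalized complex Pell vectors: first reduce the scalar triple product to the cross product computed in the previous theorem, and then exploit the recurrence relation \eqref{2-1} satisfied by the components of $\overrightarrow{{\mathbb{D}^P_k}}$.

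First, I would write the mixed product as a $3\times 3$ determinant with rows $\overrightarrow{{\mathbb{D}^P_n}}$, $\overrightarrow{{\mathbb{D}^P_m}}$, $\overrightarrow{{\mathbb{D}^P_k}}$, and expand along the third row, so that
\[
\left\langle {\overrightarrow {{\mathbb{D}^P_n}}} \times {\overrightarrow {{\mathbb{D}^P_m}}}\,,\,{\overrightarrow {{\mathbb{D}^P_k}}} \right\rangle
= \left\langle {\overrightarrow {{\mathbb{D}^P_n}}} \times {\overrightarrow {{\mathbb{D}^P_m}}}\,,\,\bigl(\mathbb{D}^P_k,\,\mathbb{D}^P_{k+1},\,\mathbb{D}^P_{k+2}\bigr)\right\rangle .
\]
By the preceding theorem (equation \eqref{4-25}), the cross product already factors as
\[
{\overrightarrow {{\mathbb{D}^P_n}}} \times {\overrightarrow {{\mathbb{D}^P_m}}} = (-1)^{m+1}\,P_{n-m}\,(1 + 2\varepsilon)\,e_{P}\,(i + 2j - k).
\]

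Next, I would substitute this factored expression into the scalar product. Since the scalar coefficient $(-1)^{m+1}P_{n-m}(1+2\varepsilon)e_P$ pulls out of the inner product, the whole quantity reduces to
\[
(-1)^{m+1}\,P_{n-m}\,(1 + 2\varepsilon)\,e_{P}\,\bigl(\mathbb{D}^P_k + 2\,\mathbb{D}^P_{k+1} - \mathbb{D}^P_{k+2}\bigr).
\]
The key step, and really the only content of the proof, is to observe that the generalized dual Pell sequence satisfies the same recurrence \eqref{2-1} as its Pell and generalized Pell counterparts, namely $\mathbb{D}^P_{k+2} = 2\,\mathbb{D}^P_{k+1} + \mathbb{D}^P_k$. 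This follows immediately from $\mathbb{D}^P_n = \mathbb{P}_n + \varepsilon\,\mathbb{P}_{n+1}$ and the recurrence satisfied by $\mathbb{P}_n$. Therefore the bracketed expression vanishes, giving the claimed equality \eqref{4-29}.

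There is essentially no obstacle here, since both of the analogous theorems (for real and complex generalized Pell vectors) already furnish a complete template; the argument is structural rather than computational. The only thing one must verify carefully is that the dual unit $\varepsilon$ causes no issues: because the scalar factor $(1+2\varepsilon)e_P$ is a dual scalar and $\mathbb{D}^P_k + 2\mathbb{D}^P_{k+1} - \mathbb{D}^P_{k+2} = 0$ in the ring of dual numbers (with $\varepsilon^2 = 0$), multiplication by this dual scalar still annihilates the expression, so the result holds without qualification.
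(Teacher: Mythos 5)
Your proposal is correct, and it rests on the same two facts the paper uses: the cross product $\overrightarrow{\mathbb{D}^P_n}\times\overrightarrow{\mathbb{D}^P_m}$ is a (dual) scalar multiple of $(i+2j-k)$, and the three-term recurrence makes the dot product of $(i+2j-k)$ with any Pell-type vector vanish. The route differs slightly in how the dual unit is handled. The paper first splits everything into $\varepsilon$-components: it writes $\overrightarrow{\mathbb{D}^P_n}\times\overrightarrow{\mathbb{D}^P_m}=(\overrightarrow{\mathbb{P}_n}\times\overrightarrow{\mathbb{P}_m})+\varepsilon\,(\overrightarrow{\mathbb{P}_n}\times\overrightarrow{\mathbb{P}_{m+1}}+\overrightarrow{\mathbb{P}_{n+1}}\times\overrightarrow{\mathbb{P}_m})$ and $\overrightarrow{\mathbb{D}^P_k}=\overrightarrow{\mathbb{P}_k}+\varepsilon\,\overrightarrow{\mathbb{P}_{k+1}}$, expands the mixed product into several triple products of generalized Pell vectors (the $\varepsilon^2$ terms dying), and then kills each one via $\mathbb{P}_k+2\,\mathbb{P}_{k+1}-\mathbb{P}_{k+2}=0$. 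You instead invoke the already-factored dual formula \eqref{4-25} as a black box, pull out the dual scalar $(-1)^{m+1}P_{n-m}(1+2\varepsilon)e_P$, and use the recurrence at the level of the dual sequence itself, $\mathbb{D}^P_{k+2}=2\,\mathbb{D}^P_{k+1}+\mathbb{D}^P_k$; this is essentially the argument the paper gives in the complex case \eqref{3-27}, transported to the dual setting, and it is the cleaner of the two since it avoids re-expanding in $\varepsilon$. Your closing remark is the right thing to check: since the dot product in the dual case involves no conjugation, the dual scalar factors out of the bilinear pairing, and multiplying the zero bracket $\mathbb{D}^P_k+2\,\mathbb{D}^P_{k+1}-\mathbb{D}^P_{k+2}$ by a dual scalar (even a zero divisor such as one containing $\varepsilon$) still gives zero, so no information is lost relative to the paper's componentwise argument.
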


\begin{proof}
Using the properties \\

${\overrightarrow {{\mathbb{D}^P_n}}} \times {\overrightarrow {{\mathbb{D}^P_m}}} = ({\overrightarrow {{\mathbb{P}_{n}}}} \times {\overrightarrow {{\mathbb{P}_{m}}}}) + \varepsilon \,({\overrightarrow {{\mathbb{P}_{n}}}} \times {\overrightarrow {{\mathbb{P}_{m+1}}}} + {\overrightarrow {{\mathbb{P}_{n+1}}}} \times {\overrightarrow {{\mathbb{P}_{m}}}})$ \\

and ${\overrightarrow {{\mathbb{D}^P_k}}}=\overrightarrow {{\mathbb{P}_{k}}}+\varepsilon \,\overrightarrow {{\mathbb{P}_{k+1}}}$, we can write\\
$$\begin{aligned}
\begin{array}{lll}
\left\langle {\overrightarrow {{\mathbb{D}^P_n}}} \times {\overrightarrow {{\mathbb{D}^P_m}}},\,\, {\overrightarrow {{\mathbb{D}^P_k}}} \right\rangle= \left\langle {{\overrightarrow {{\mathbb{P}_{n}}}} \times {\overrightarrow {{\mathbb{P}_{m}}}},\,\,{\overrightarrow {{\mathbb{P}_{k}}}}}\right\rangle +
\varepsilon \, [ \left\langle {{\overrightarrow {{\mathbb{P}_{n}}}} \times {\overrightarrow {{\mathbb{P}_{m}}}},\,\,{\overrightarrow {{\mathbb{P}_{k+1}}}}}\right\rangle\\\,\\

+\left\langle {{\overrightarrow {{\mathbb{P}_{n}}}} \times {\overrightarrow {{\mathbb{P}_{m+1}}}},\,\,{\overrightarrow {{\mathbb{P}_{k}}}}}\right\rangle +
\left\langle {{\overrightarrow {{\mathbb{P}_{n+1}}}} \times {\overrightarrow {{\mathbb{P}_{m}}}},\,\,{\overrightarrow {{\mathbb{P}_{k+1}}}}}\right\rangle].
\end{array}	
\end{aligned}
$$
Then from equations (\ref{4-26}), (\ref{4-27}) and (\ref{4-28}), we obtain 
$$
\left\langle (i+2j-k),\,\overrightarrow {{\mathbb{P}_{k}}} \right\rangle= {\mathbb{P}_{k}}+2\,{\mathbb{P}_{k+1}}-{\mathbb{P}_{k+2}}=0,
$$
$$\left\langle (i+2j-k),\,\overrightarrow {{\mathbb{P}_{k+1}}} \right\rangle= {\mathbb{P}_{k+1}}+2\,{\mathbb{P}_{k+2}}-{\mathbb{P}_{k+3}}=0 .$$
Thus, we have the equation (\ref{4-29}).
\end{proof}
\section{Results and Discussion}
\par The generalized Pell, the generalized complex Pell and the generalized dual Pell sequences have been defined as follows:
\\
\par The generalized Pell sequence is given by
\begin{equation}\label{G1}
{\mathbb{P}_{n}} = 2\,{\mathbb{P}_{n-1}} + {\mathbb{P}_{n-2}} \;,\;\;(n \ge 3)
\end{equation}
with ${\mathbb{P}_{0}} = q\,,\;{\mathbb{P}_{1}} = p\,,\;{\mathbb{P}_{2}} = 2\,p + \,q$,\,
where $p,\,q$ are arbitrary integers. \\
\par The generalized complex Pell sequence is given by
\begin{equation}\label{G2}
{\mathbb{C}_{n}} = {\mathbb{P}_{n }} + i \,{\mathbb{P}_{n + 1}}\;,\,\, i^2=-1 ,\,\, i\ne{0}
\end{equation}
with ${\mathbb{C}_1} = p\,+i\,(2\,p + q) \,,\;{{C}_2} = (2\,p\,+\,q)+i\,(5\,p + 2\,q),\;{\mathbb{C}_3} = (5\,p + 2\,q)\,+\,i\,(12\,p + 5\,q)$,...  \,
where $p,q$ are arbitrary integers.\\
\par The generalized dual Pell sequence is given by
\begin{equation}\label{G3}
{\mathbb{D}^P_{n}} = {\mathbb{P}_{n}}+ \varepsilon\,{\mathbb{P}_{n+1}})={\mathbb{D}^P_{n}} = (\,p\,+\,\varepsilon\,(\,2\,p + q\,))\,{P_{n}} + (\,q\,+\,\varepsilon\,p\,){P}_{n - 1}\;,\;\;(n \ge 2).
\end{equation}
with ${\mathbb{D}^P_0} = q\,+\varepsilon(p + q)\,,\;{\mathbb{D}^J_1} = p +\,q\,+\,\varepsilon(p + 3\,q),\;{\mathbb{D}^J_2} = p + 3\,q\,+\,\varepsilon(3\,p + 5\,\,q)$ \,
where $p,q$ are arbitrary integers.

For real, complex and dual cases of the generalized Pell sequence are examined special cases (for ${{p}=1 , \, {q}=0}$). 
In addition, limit of the generalized Pell number and Binet's formula for the generalized Pell sequence are given. Furthermore, real, complex and dual vectors and dot product, cross product and mixed product of these vectors are given.
\section{Conclusions}
\par The generalized Pell, the generalized complex Pell and the generalized dual Pell sequences have been introduced and studied. The use of such special sequences has increased significantly in quantum mechanics, quantum physics, etc.   
\section{Declaration}
\subsection{Competing Interests}The authors declare that they have no competing interests.
\subsection{Funding}The authors received no direct funding for this research.
\subsection{Authors' Contributions}
F.T.A. conceived of the presented idea. F.T.A. developed the theory and performed the computations. F.T.A verified the analytical methods. F.T.A. contributed to the design and implementation of the research, to the analysis of the results. K.K  reviewed the manuscript. All authors discussed the results and contributed to the writing of the manuscript.


\begin{thebibliography}{99}
	
\bibitem{agrawal}  O. P. Agrawal: Hamilton Operators and Dual Number Quaternions in Spectral Kinematik. Mech. Mach. Theory Vol.22, no.6(1987), 569-575.
\bibitem{anetta} Szynal-Liana Anetta and Wloch  Iwona: The Pell Quaternions and the Pell Octonions. Adv. Appl. Clifford Algebras, doi: 10.1007/s00006-015-0570-9 (2015).

\bibitem{bicknell} M. Bicknell: A primer of the Pell sequence and related sequences. The Fibonacci Quarterly, 13(4)(1975), 345-349.

\bibitem{cimen}  B.C. \c{C}imen and \.{I}pek A. : On Pell Quaternions and Pell-Lucas Quaternions. Adv. Appl. Clifford Algebras, 26(2016), 39-51.

\bibitem{ercan} Z. Ercan and S. Yüce: On properties of the Dual Quaternions. Europen Journal of Pure and Applied Mathematics Vol.4, No.2(2011), 142-146. 

\bibitem{gokkaya} H. G\"{o}kkaya and K. Uslu: On properties of New Familie of Pell, Pell-Lucas and Modified Pell Sequences. SAÜ. Fen Bilimleri Dergisi, 15(2)(2011), 151-155.

\bibitem{guven} I. A. Güven and S. K.  Nurkan: A new approach  to Fibonacci, Lucas numbers and dual vectors. Adv. Appl. Clifford Algebras, 12(3)(2014), 241-250.

\bibitem{halici} S. Hal{\i}c{\i} and  A. Da\c{s}demir: On Some Relations Among Pell, Pell-Lucas and Modified Pell Sequences. SAÜ. Fen Bilimleri Dergisi, 14(2)(2010), 141-145.

\bibitem{halici2} S. Hal{\i}c{\i}: Some Sums Formulae for Products of terms of Pell and Pell-Lucas Numbers. Ars. Comb., 15(2)(2014), 151-155.


\bibitem{horadam1} A. F. Horadam :  A Generalized Fibonacci Sequence. American Math. Monthly, 68(1961), 455-459. 
	
\bibitem{horadam2} A. F. Horadam: Complex Fibonacci Numbers and  Fibonacci Quaternions. American Math. Monthly, 70(1963), 289-291.

\bibitem{horadam3} A. F. Horadam: Basic Properties of a Certain Generalized Sequence of Numbers. The Fibonacci Quarterly, 3(3)(1965), 161-176.
		
\bibitem{horadam4} A. F. Horadam : Pell Identities: The Fibonacci Quarterly, 9(3)(1971), 245-252.

\bibitem{horadam5}  A. F. Horadam: Jacobsthal and Pell Curves. The Fibonacci Quarterly, 26 (1988), 79-83.

\bibitem{kilic} E. K{\i}l{\i}\c{c} and  D. Ta\c{s}\c{c}{\i}: The Linear algebra of the Pell Matrix. Bol. Soc. Mat. Mexicana 3(11)(2005), 163-174.

\bibitem{koshy} T. Koshy: Fibonacci and Lucas Numbers with Applications. John Wiley and Sons, Proc., New York-Toronto, 2001.

\bibitem{melham} R. Melham: Sums Involving Fibonacci and Pell Numbers. Portugaliae Mathematica, 56(3)(1999), 309-317.

\bibitem{rosen} K. H. Rosen: Discrete Mathematics and  its Applications. McGraw-Hill, 1999.


\bibitem{tasci} D. Ta\c{s}\c{c}{\i}, N.  Tu\u{g}lu and A\c{s}\c{c}{\i} M: On Fibo-Pascal matrix involving k-Fibonacci and k-Pell matrices. Arab.J. Sci. 
36(2011), 1031-1037.


\bibitem{vajda} S. Vajda: Fibonacci and Lucas Numbers the Golden Section., Ellis Horrowood Limited Publ., England, 1989.

\bibitem{walton} J. E. Walton and  A. F.  Horadam: Some Aspects of Generalized Fibonacci Numbers. The Fibonacci Quarterly, 12(3)(1974), 241-250.


	

		









\end{thebibliography}
\end{document}